\newtheorem{theorem}{Theorem}[section]
\newtheorem{proposition}[theorem]{Proposition}
\newtheorem{lemma}[theorem]{Lemma}
\newtheorem{definition}[theorem]{Definition}
\theoremstyle{definition}
\newtheorem{example}[theorem]{Example}
\theoremstyle{definition}
\theoremstyle{remark}
\newcommand{\LL}{\mathcal L}
\newcommand{\zN}{\mathbb N}
\renewcommand{\H}{H(\mathbb{C})}
\newcommand{\C}{\mathbb C}
\newcommand{\sub}{\subseteq}
\newcommand{\zD}{\mathbb D}
\newcommand{\f}{\frac}
\renewcommand{\l}{\left(}
\renewcommand{\r}{\right)}
\title{Hypercyclic bilinear operators on Banach spaces}
\author{Rodrigo Cardeccia}
\address{DEPARTAMENTO DE MATEM\'ATICA - PAB I,
	FACULTAD DE CS. EXACTAS Y NATURALES, UNIVERSIDAD DE BUENOS AIRES, (1428) BUENOS AIRES, ARGENTINA AND IMAS-CONICET} \email{rcardeccia@dm.uba.ar} 
\thanks{Partially supported by ANPCyT PICT 2015-2224, UBACyT 20020130300052BA, PIP 11220130100329CO and CONICET}
\begin{document}

\begin{abstract}
	We study the dynamics induced by an $m$-linear operator. 
	We answer a question of B\`es and Conejero showing an example of an $m$-linear hypercyclic operator acting on a Banach space. Moreover, we prove the existence of $m$-linear hypercyclic operators on arbitrary infinite dimensional separable Banach spaces. We also prove an existence result about symmetric bihypercyclic bilinear operators, answering a question by Grosse-Erdmann and Kim. 
\end{abstract}
\maketitle
\section{Introduction}
Given a Fr\'echet space $X$, a linear operator $T$ is called hypercyclic provided that there is a vector $x$ such that its induced orbit $Orb_T(x):=\{T^n(x):n\in\zN\}$ is dense in $X$. The 
first example of a hypercyclic operator is the translation operator $\tau_1(f)=f(1+\cdot)$ on  $\H$, the space of complex analytic functions, and was found by Bikhoff \cite{Bir29} in 1929. Later, some other natural examples appeared, like the MacLane operator, $D(f)=f'$ also on $\H$ \cite{Mac52}, the Rolekwicz operator $2B$ on $\ell_p$ \cite{Rol69} ($B$ denotes the backward shift operator),  among others. However it was not until 
the 80's that a systematic treatment on the subject began.   Evidences on the maturity reached in the area are the survey \cite{Gro99} and the books \cite{BayMat09,GroPer11}. 
In the last decades linear dynamics has experienced a lively development and it seems natural to extend the notion to the iteration of non-linear mappings.

The first to study dynamics of homogeneous polynomials on Banach spaces was Bernardes \cite{Ber98}. Maybe surprisingly he showed that no (non linear) homogeneous polynomial is hypercyclic if the space is Banach. The reason is that every homogeneous polynomial supports an invariant ball (afterwards \textit{the limit ball}) at the origin. 

On the other hand, if the space is not normable then it can support hypercyclic homogeneous polynomials. The first who realized this fact was Peris, 
who exhibited an example of a hypercyclic homogeneous polynomial on $\C^\zN$, see \cite{Per99,Per01}.
Later on, some other examples appeared, in some spaces of differentiable functions over the real line \cite{AroMir08}, some K\"othe Echelon spaces (including $H(\zD)$) \cite{MarPer10} and recently in $\H$ \cite{CardMurH(C)}. 
 
Grosse-Erdmann and Kim \cite{GroKim13} generalized the notion of hypercyclicity to bilinear operators, and showed that, in some sense, the  limit ball problem (which is an obstruction for homogeneous polynomials to be hypercyclic) may be avoided. Let us recall their definition. Given a Banach space $X$ and $x,y\in X$ the orbit of a bilinear mapping $M\in\mathcal L(^2X;X)$ with initial conditions $(x,y)$ is $\cup_{n\ge 0} M^n$ where the $n$- states  $M^n(x,y)$ are inductively defined as $M^0(x,y)=\{x,y\}$ and $M^n(x,y)=M^{n-1}(x,y)\cup \{M(z,w):z,w\in M^{n-1}\}$. A bilinear operator is said to be \textit{bihypercyclic} provided that some orbit is dense in $X$. In \cite{GroKim13}, some nice results concerning bihypercyclic operators were obtained. For example, the set of bihypercyclic vectors is always $G_\delta$ but never residual. They also succeeded to construct bihypercyclic bilinear operators (not necessarily symmetric) in arbitrary separable Banach spaces (including the finite dimensional case). However it is unknown whether the operator can be taken to be symmetric and the following question was posed (see \cite[p. 708]{GroKim13}).

	\textbf{Question A.}\textit{
	Let $X$ be a separable Banach space. Does there exist a symmetric bihypercyclic operator in $\LL(^2X)$?}

Nevertheless the definition of the orbit induced by a multilinear operator is not canonic and other interpretations are available. Whereas the $n$-state of the iterate of a linear operator depends only on the immediately preceding step ($x_n=T(x_{n-1})$), it would be desirable that  the $n$-state of the iterate of an $m$-linear operator depends only on the $m$-previous steps. B\`es and Conejero \cite{BesCon14} defined the orbit induced by a multilinear operator $M$ with initial conditions $x_{1-m},\ldots x_0$ as $Orb_M(x_{m-1},\ldots x_0)=\cup_{n} \{x_n\}$, where each $x_n$ is inductively defined as $x_n=M(x_{n-m},\ldots,x_{n-1})$. A multilinear operator is said to be hypercyclic if there are $x_{1-m},\ldots,x_0\in X$ such that $Orb_M(x_{1-m},\ldots x_0)$ is dense in $X$.  Since the orbit in the sense of B\`es and Conejero is contained in the orbit in the sense of Grosse-Erdmann and Kim it follows that a hypercyclic bilinear operator is automatically bihypercyclic. This contention implies also that there is again a sense of limit ball for Banach spaces, when $m$-consecutive vectors are in $\f{1}{\|M\|^{m-1}} B_X$, the orbit tends to zero and therefore the set of hypercyclic vectors is never residual. In \cite{BesCon14} examples of multilinear operators over non normable Fr\'echet spaces where given, including $\H$ and $\C^\zN$. It was also proved that every infinite dimensional and separable Banach space supports a supercyclic multilinear operator (i.e. $\overline{\C Orb_M(x_{1-m,\ldots ,x_0})}=X$). However no example of a hypercyclic multilinear operator on a Banach space or without a residual set of hypercyclic vectors was given and thus the following questions were posed in \cite[Section 5]{BesCon14}.
 
\textbf{Question B.} 
\textit{
Let $X$ be a Fr\'echet space and $M$ a hypercyclic multilinear operator. Is the set of hypercyclic vectors necessarily residual? }
 
\textbf{Question C.} \textit{
Are there hypercyclic multilinear operators acting on  Banach spaces?}

Of course a positive answer for Question B implies a negative answer for  Question C.	
 
The structure of the paper is the following. In Section \ref{SecRes} we propose a notion of transitivity for multilinear operators and analyze examples of multilinear hypercyclic operators over non normable  Fr\'echet spaces with and without a residual set of hypercyclic vectors. In particular we answer Question B by showing a multilinear hypercyclic operator without a residual set of hypercyclic vectors. In Section \ref{SecBan} we answer Question C positively. Moreover we construct bilinear hypercyclic operators in arbitrary separable and infinite dimensional Banach spaces. In Section \ref{SecBih} we answer Question A posed by Grosse-Erdmann and Kim \cite{GroKim13}, proving that there are symmetric bihypercyclic operators in arbitrary separable and infinite dimensional Banach spaces.

\section{Bilinear hypercyclic operators on non normable Fr\'echet spaces}\label{SecRes}
%
The orbit of an $m$-linear operator $M$ with initial condition $(x_{1-m},\dots,x_0)$ was defined in \cite{BesCon14} as the set
$$
Orb_M(x_{1-m},\dots,x_0)=\cup_{n\geq 1-m} \{x_n\},
$$
where each $x_n$ is inductively defined as $x_n:= M(x_{n+1-m},\ldots x_n)$. The $m$-linear operator $M$ is said to be \textit{hypercyclic} (in the B\`es and Conejero sense \cite{BesCon14}) if there exists an $m$-tuple $(x_{1-m},\dots,x_0)\in X^m$ such that the $m$-linear orbit of $M$ with initial condition $(x_{1-m},\dots,x_0)$ is dense in $X$. In this case $(x_{1-m},\dots,x_0)$ is called a hypercyclic vector for $M$.

A family of functions $\{f_n:n\in\zN\}$, $f_n:X\to Y$, is said to be universal provided that there exists $x\in X$ such that its orbit $\{f_n(x):n\in \zN\}$ is dense in $Y$. Also, the family is said to be transitive if for all nonempty open sets $U\sub X$ and $V\sub  Y$, there exists $n$ such that $f_n(U)\cap V\neq \emptyset$. Thus, if we define inductively 
\begin{align*}
M^1(x_{1-m},\dots,x_0) & =  M(x_{1-m},\dots,x_0) ,\\
& \vdots \\
M^{ m}(x_{1-m},\dots,x_0) & =  M(x_0,M^1(x_{1-m},\dots,x_0),\dots,M^{ m-1}(x_{1-m},\dots,x_0)) ,\\
M^{n}(x_{1-m},\dots,x_0) &  =   M(M^{ n-m}(x_{1-m},\dots,x_0),\dots,M^{ n-1}(x_{1-m},\dots,x_0)), \quad \text{for }n>m;
\end{align*}
 we have by definition that an $m$-linear operator $M$ is hypercyclic if and only if the family $\{M^{ n}:n\in\zN\},\ M^{ n}:\overbrace{X\times \ldots \times X}^m\to X$ is universal. Since the universal vectors of an universal family are always $G_\delta$ it follows that the set of hypercyclic vectors of a hypercyclic multilinear operator is a $G_\delta$ set.

 It is well known (see \cite{Gro99}) that if $X$ is a complete metric space and $Y$ is separable, a family is transitive if and only if it is universal and  the universal vectors are residual.  Therefore, if the family $\{M^{ n}\}$ is transitive then $M$ results hypercyclic with a residual set of hypercyclic vectors. This allows us to give a notion of transitivity for a hypercyclic $m$-linear operator $M$.
 
 \begin{definition}
 An  $m$-linear operator $M$ is said to be strongly transitive provided that the family $\{M^{ n}\}$ is transitive. Equivalently, $M$ is hypercyclic with a residual set of hypercyclic vectors.
 \end{definition} 

 Thus, Question  B can be formulated in the following way:\\
 \textit{
 Let $X$ be a Fr\'echet space and $M$ an $m$-linear hypercyclic operator. Is  $M$ necessarily strongly transitive?}

 Notice that if $X$ is a Banach space, then no $m$-linear operator $M$ can be strongly transitive. 
Indeed, in the same way as in the case of homogeneous polynomials \cite{Ber98,CardMurPoli} and of bihypercyclic operators \cite{GroKim13}, it is possible to define a notion of limit ball: if $x_{1-m},\ldots x_0 \in {\frac{1}{\|M\|^{m-1}}}B_X$ then the orbit
$Orb_M\{x_{1-m},\ldots,x_0\}$ is contained in ${\frac{1}{\|M\|^{m-1}}}B_X$, moreover the orbit is a sequence that converges to zero. Therefore the hypercyclic vectors can not be dense in $X^m$ and consequently the operator is not strongly transitive. 

In \cite{BesCon14} it was proved that the multilinear operator $M=e_1'\otimes\dots\otimes e_1'\otimes B$
is hypercyclic in $\C^\zN$, where $\C^\zN$ is the space of complex sequences with fundamental system of seminorms $\|a\|_k=\max_{j\leq k} |a_j|$ and $B$ is the backward shift.  At the same time it was proved that the set of hypercyclic vectors is residual. Therefore it follows that $M$ is strongly transitive. Here we follow a different approach that we believe is simpler. We prove directly that $M$ is strongly transitive.

\begin{proposition}\label{C a la N}
Let $M\in \LL(^{m} \C^\zN)$, $M(x_{1-m},\ldots,x_0)=[x_{1-m}]_1\ldots [x_{-1}]_1B(x_0)$. Then $M$ is strongly transitive.
\end{proposition}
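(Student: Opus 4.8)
The plan is to prove strong transitivity by verifying that the family $\{M^n\}$ is transitive, i.e.\ that for all nonempty open sets $U, V \subseteq \C^\zN$ there exists $n$ with $M^n(U) \cap V \neq \emptyset$. Since the seminorms on $\C^\zN$ only depend on finitely many coordinates, we may assume $U$ is a neighbourhood prescribing the first $k$ coordinates of each of the $m$ input slots (within $\varepsilon$ of given values) and $V$ prescribes the first $k$ coordinates of the output. First I would record the basic computation: if $x^{(1)},\dots,x^{(m)}$ are the $m$ consecutive states feeding into $M$, then the next state is $x^{(m+1)} = [x^{(1)}]_1[x^{(2)}]_1\cdots[x^{(m-1)}]_1\, B(x^{(m)})$, so the scalar factor $[x^{(1)}]_1\cdots[x^{(m-1)}]_1$ multiplies a shifted copy of the last state. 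Iterating, the $n$-th state $M^n(x_{1-m},\dots,x_0)$ is a product of certain first-coordinates of earlier states times $B^r(x_0)$ for an appropriate shift $r$ growing with $n$.

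The key step is the standard perturbation/padding argument used for the backward shift. Pick a point $(x_{1-m},\dots,x_0) \in U$ with all coordinates beyond the $k$-th equal to zero in every slot. Given the target $V$, choose a vector $v \in \C^\zN$ in $V$ supported on its first $k$ coordinates. Now modify $x_0$ (staying inside $U$, since we only change coordinates with index $> k$) by inserting, far out in the tail at positions $r+1,\dots,r+k$, the entries of $v$ rescaled by the inverse of the scalar factor $[x_{1-m}]_1\cdots[x_{-2}]_1$ (assuming this product is nonzero, which we can arrange by a tiny perturbation inside $U$ of the relevant first coordinates, if necessary). After $n$ steps with $n$ chosen so that $B^{n}$ brings position $r+1$ down to position $1$, the contribution of the last state $x_0$ to $M^n$ will be exactly $v$ on the first $k$ coordinates, provided $r$ is chosen large enough that none of the earlier, nonzero ``noise'' coordinates of the intermediate states interfere — here one must check that the running product of first-coordinates along the orbit does not vanish and that the blocks being shifted do not overlap. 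One should verify that after the insertion the first $k$ coordinates of each input slot are still within $\varepsilon$, which is immediate since the insertion happens at indices $> k$.

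The main obstacle, and the point needing the most care, is bookkeeping the multiplicative scalar factors along the orbit: unlike the linear case, each step multiplies by a product of first coordinates of the previous $m-1$ states, so one must ensure (i) these products are nonzero throughout the relevant initial segment of the orbit — achieved by perturbing finitely many first coordinates inside $U$ away from $0$ — and (ii) that the cumulative scalar by which the inserted block $v$ gets multiplied when it finally reaches the front can be computed in advance and compensated for by rescaling $v$ before insertion. Since all the intermediate states are eventually determined by the finitely many nonzero coordinates of the initial $m$-tuple, this cumulative scalar is a fixed nonzero number once the initial data (after perturbation) is fixed, so the rescaling is well defined. I would then conclude that $M^n(U) \cap V \neq \emptyset$, hence $\{M^n\}$ is transitive; since $\C^\zN$ is a separable complete metrizable space, transitivity gives a residual set of universal (hypercyclic) vectors, i.e.\ $M$ is strongly transitive. $\hfill\qed$
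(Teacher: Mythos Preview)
Your strategy is the same as the paper's: one checks that $M^n(x_{1-m},\dots,x_0)=c_n\,B^n(x_0)$ where the scalar $c_n$ depends only on $[x_{1-m}]_1,\dots,[x_{-1}]_1$ and on the first $n-1$ coordinates of $x_0$, and then inserts the target (rescaled by $c_n^{-1}$) at positions $n+1,n+2,\dots$ of $x_0$. The paper in fact shows the stronger statement that $M^k(U_{1-m}\times\cdots\times U_0)=\C^\zN$ once $U_0$ contains a basic neighbourhood of radius $k$.

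There is, however, one concrete pitfall in your write-up. You first choose $x_0$ with all coordinates beyond the $k$-th equal to zero, and then plan to insert the rescaled target at positions $r+1,\dots,r+k$ with $r$ ``large enough''. But the cumulative scalar $c_r$ depends on the coordinates $[x_0]_2,\dots,[x_0]_{r-1}$ (via the recursion $c_{m+j+1}=c_{j+1}\cdots c_{j+m}\,[x_0]_{j+2}\cdots[x_0]_{j+m}$), so if $r>k$ your gap of zeros in positions $k+1,\dots,r$ forces $c_r=0$ and the rescaling $c_r^{-1}$ is undefined. The fix is immediate: either take $r=k$ exactly (this is what the paper does; there is no need for extra room, since $c_k$ reads only the first $k-1$ coordinates of $x_0$ and hence is blind to the insertion at positions $\ge k+1$), or fill the intermediate positions $k+1,\dots,r$ with arbitrary nonzero values, which is permitted since $U_0$ does not constrain those coordinates. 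With this correction your argument goes through. (Two smaller slips: the shift in $M^n$ is exactly $B^n$, not ``some $B^r$ with $r$ growing''; and the one-step scalar is $[x_{1-m}]_1\cdots[x_{-1}]_1$, not $\cdots[x_{-2}]_1$.)
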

\begin{proof}
The iterations of a vector $(x_{1-m},\ldots, x_0)$ are 
$$
M^n(x_{1-m},\ldots,x_0)=c_n(x_{1-m},\ldots,x_0)B^n(x_0),
$$
where $c_n(x_{1-m},\ldots, x_0)$ is a continuous function that depends on  $[x_{1-m}]_1,\ldots, [x_{-1}]_1$ and on the $n-1$ first coordinates of $x_0$. It follows, by an easy inductive argument, that the weights $c_n$ satisfy the recursive relation
$c_{m+j+1}=c_{j+1}\cdots c_{j+m}[x_0]_{j+2}\cdots [x_0]_{j+m}$.

Let $U_{1-m},\ldots U_0$ be nonempty sets. Since the family of sets $B_{\epsilon,k,x}=\{y:\|x-y\|_k< \epsilon\}$ is a  basis of open neighborhoods for the topology of $\C^\zN$, we may suppose that $B_{\epsilon,k,w}\sub U_0$ for some $k>m$, $w\in\C^\zN$. We will show that $M^{k}(U_{1-m},\ldots U_0)$ is $\C^\zN$. For $i< m$ let $x_{i-m}\in U_{i-m}$ such that $[x_{i-m}]_1\neq 0$ and $x_0\in B_{\epsilon,k,w}$ satisfying $[x_0]_j\neq 0$ for $1\leq j\leq k$.  Let $z\in \C^\zN$, let $S$ be the forward shift operator. We have that $x_0+\f{S^{k}(z)}{c_k(x_{1-m},\ldots, x_0)}\in U_0$ and since $c_n$ reads only the first coordinates of $x_{1-m},\ldots x_{-1}$ and the first $n-1$ coordinates of $x_0$, 
$$
M^k\left(x_{1-m},\ldots, x_1,x_0+\f{S^{k}(z)}{c_k(x_{1-m},\ldots,x_0)}\right)=c_k(x_{1-m},\ldots,x_0)B^k\l x_0+\f{S^{k}(z)}{c_k(x_{1-m},\ldots, x_0)}\r=z.
$$
\end{proof}
The space $\H$ of entire functions on the complex plane is, endowed with the compact open topology, a non normable Fr\'echet space. The continuous seminorms are $\|f\|_K= \sup_{z\in K}|f(z)|$, where $K\sub\C$ is a compact set. Thus, the sets $U_{\epsilon,f,R}=\{h\in \H:\|f-h\|_{B(0,R)}<\epsilon \}$ form a basis for the compact open topology.

  Adapting the techniques used in \cite{CardMurH(C)} to prove that the polynomial $P(f)=f(0)f(\cdot +1 )$ is hypercyclic in $\H$ we will prove that the bilinear operator $M(f,g)=f(0)g(\cdot+1)$ is strongly transitive in $\H$.


\begin{theorem}
 The bilinear operator $B\in \LL(^2\H)$ defined as $B(g,f)(z)=g(0)f(z+1)$ is strongly transitive.
\end{theorem}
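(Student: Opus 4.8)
The plan is to make the iterates of $B$ completely explicit, read off that the only real obstruction is the self‑referential dependence of the scalar weight of $B^{n}(g,f)$ on the vector $(g,f)$, and then settle everything by a Runge approximation on $\H$. By induction (as in the two preceding statements) one gets
$$B^{n}(g,f)(z)=c_{n}(g,f)\,f(z+n),$$
where $c_{1}=g(0)$, $c_{2}=g(0)f(0)$ and $c_{n}=c_{n-1}c_{n-2}\,f(n-2)$ for $n\ge 3$; hence, for $n\ge 2$, $c_{n}(g,f)$ is a monomial in $g(0),f(0),\dots,f(n-2)$ in which each of these occurs with exponent $\ge 1$. In particular $c_{n}$ depends only on $g(0)$ and on $f|_{\overline{B(0,n-2)}}$, and $c_{n}(g,f)\neq0$ precisely when $g(0)\neq0$ and $f(j)\neq0$ for $j=0,\dots,n-2$. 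Strong transitivity of $B$ means transitivity of $\{B^{n}\}$, i.e.\ that for every triple of nonempty open sets $U_{-1},U_{0},V\subseteq\H$ there is an $n$ with $B^{n}(U_{-1}\times U_{0})\cap V\neq\emptyset$, and this is what I would prove.

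First I would reduce: shrinking $V$ I may assume $V=U_{\epsilon',h_{0},R'}$ and, as the entire functions nonvanishing at a prescribed finite set are dense, that $h_{0}(j)\neq0$ for every integer $j$ with $-R'\le j\le-2$; I choose $g\in U_{-1}$ with $g(0)\neq0$; shrinking $U_{0}$ and perturbing, I may assume $U_{0}\supseteq U_{\epsilon,f_{0},R}$ with $f_{0}(j)\neq0$ for the finitely many integers $j$ in $\overline{B(0,R)}$; and I fix $n$ so large that $n>R+R'+2$, so that $\overline{B(0,R)}$ and $\overline{B(n,R')}$ are disjoint. Write $J$ for the (possibly empty, when $R'<2$) finite set of integers $j$ with $n-R'\le j\le n-2$; note $J\subseteq\overline{B(n,R')}$ and $J\cap\{0,\dots,\lfloor R\rfloor\}=\emptyset$.

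The construction of the hypercyclic vector is the heart of the matter. I want an entire $f$ with $\|f-f_{0}\|_{\overline{B(0,R)}}<\epsilon$ (so $f\in U_{0}$), with $f(j)\neq0$ for $j=0,\dots,n-2$ (so $c_{n}(g,f)\neq0$), and with $\|c_{n}(g,f)\,f(\cdot+n)-h_{0}\|_{\overline{B(0,R')}}<\epsilon'$ (so $B^{n}(g,f)\in V$). The last requirement asks that $f|_{\overline{B(n,R')}}$ be close to $h_{0}(\cdot-n)/c_{n}(g,f)$, but $c_{n}$ itself reads the values of $f$ at the points of $J\subseteq\overline{B(n,R')}$: the target profile and the weight are coupled on the finite set $J$. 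To untangle this I would freeze $f\equiv1$ at every integer in $\{0,\dots,n-2\}$ lying outside $\overline{B(0,R)}\cup J$; then the factor of $c_{n}$ not involving $f|_{\overline{B(n,R')}}$ becomes a fixed nonzero constant $M$, and a short computation shows that in order to have $c_{n}(g,f)\,f(z+n)=h_{0}(z)$ on $\overline{B(0,R')}$ \emph{exactly}, the restriction $f|_{\overline{B(n,R')}}$ must equal a scalar multiple $\lambda\,h_{0}(\cdot-n)$, with $\lambda$ any nonzero solution of $M\lambda^{\,1+\sum_{j\in J}\beta_{j}}\prod_{j\in J}h_{0}(j-n)^{\beta_{j}}=1$, where $\beta_{j}\ge1$ is the exponent of $f(j)$ in $c_{n}$; such $\lambda$ exists because, by the reduction on $h_{0}$, the coefficient on the left is nonzero. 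Now set $K=\overline{B(0,R)}\cup\overline{B(n,R')}\cup\bigcup\overline{D}$, the union over pairwise disjoint small closed disks $\overline{D}$ centered at the integers in $\{0,\dots,n-2\}\setminus(\overline{B(0,R)}\cup J)$; then $K$ is compact with connected complement, and on a neighbourhood of $K$ one has a holomorphic $\phi$ with $\phi=f_{0}$ near $\overline{B(0,R)}$, $\phi\equiv1$ near each $\overline{D}$, and $\phi=\lambda\,h_{0}(\cdot-n)$ near $\overline{B(n,R')}$. By Runge's theorem take a polynomial $f$ with $\|f-\phi\|_{K}<\eta$. Since $n$ — and hence each exponent in $c_{n}$ — is already fixed, choosing $\eta$ small enough guarantees simultaneously that $f\in U_{0}$, that $f(0),\dots,f(n-2)\neq0$, and that $c_{n}(g,f)$ is as close as we wish to $1/\lambda$, whence $\|c_{n}(g,f)\,f(\cdot+n)-h_{0}\|_{\overline{B(0,R')}}<\epsilon'$; thus $B^{n}(g,f)\in V$, and $\{B^{n}\}$ is transitive.

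I expect the coupling on $J$ to be the one genuinely delicate point: the weight $c_{n}(g,f)$ is evaluated at finitely many of the very values of $f$ that must be shaped on the target disk $\overline{B(n,R')}$, so one cannot simply fix the orbit data first and then freely prescribe $f$ near the target; the remedy is that, once the interior integers are frozen, this coupling reduces to a single scalar equation for $\lambda$. The fact that the exponents $\beta_{j}$ in $c_{n}$ may be very large (they grow Fibonacci‑like in $n$) is harmless, because $n$ is chosen before the Runge step, so the tolerance $\eta$ can be taken last. When $R'<2$ the set $J$ is empty and the argument collapses to the plain two‑disk Runge approximation used for the polynomial $P(f)=f(0)f(\cdot+1)$ in \cite{CardMurH(C)}.
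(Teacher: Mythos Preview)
Your proof is correct and follows the same overall strategy as the paper's: reduce to basic neighbourhoods, write the iterates explicitly as $B^{n}(g,f)=c_{n}(g,f)\,f(\cdot+n)$ with $c_{n}$ a Fibonacci-weighted monomial in $g(0),f(0),\dots,f(n-2)$, and then use Runge's theorem to manufacture $f$. The one substantive difference lies in how the coupling between the weight $c_{n}$ and the prescribed target profile is untangled. The paper chooses the specific iterate $n=2n_{0}$ with $n_{0}=\lfloor R\rfloor+1$ (and takes $R'=R$), a choice engineered so that \emph{exactly one} integer, namely $n_{0}$, lies strictly between the source disk $\overline{B(0,R)}$ and the target disk $\overline{B(2n_{0},R)}$; the free value $\alpha\approx f(n_{0})$ is then taken to be an appropriate root that forces $c_{2n_{0}}\to 1$, while the target profile $g(\cdot-2n_{0})$ is left unscaled. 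You instead allow any large $n$, freeze \emph{all} intermediate integer values of $f$ to $1$, and push the adjustment into a scalar $\lambda$ multiplying the target $h_{0}(\cdot-n)$, so that the coupling on $J$ becomes the single equation $M\lambda^{1+\sum_{J}\beta_{j}}\prod_{J}h_{0}(j-n)^{\beta_{j}}=1$. Both devices reduce the self-referential constraint to one scalar unknown; the paper's trick keeps the Runge data to three disks, whereas yours uses a finite union of small disks but works for every sufficiently large $n$ and does not rely on the numerical coincidence $R'=R$.
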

\begin{proof}
 Let $U_1,U_2,V$ be nonempty open sets. We may suppose that
 \begin{align*}
  U_1&=\{h\in \H: \|h-f_1\|_{B(0,R)}<\epsilon\},\\
  U_2&=\{h\in \H: \|h-f_2\|_{B(0,R)}<\epsilon\},\\
  V&=\{h\in \H: \|h-g\|_{B(0,R)}<\epsilon\};\\
 \end{align*}
where $R>2$, $R$ is not a natural number and $g,f_1,f_2$ do not have zeros on the integer numbers. We will show that $B^{ 2n_0}(f_1,h)\in V$, for some $h\in U_2$, where $n_0=\lfloor R\rfloor +1$. Note that $R<n_0<2n_0-R<n_0+1$. Thus $n_0$ is the only natural number in $\{k\in\zN:R<k<2n_0-R\},$ and $[0,2n_0]\cap \zN \sub [0,R]\cup \{n_0\}\cup [2n_0-R,2n_0]$.

Observe that $B^{ n}(f_1,f_2)(z)=c_n(f_1,f_2) f_2(z+n)$, where
$$c_n(f_1,f_2)=f_1(0)^{F_n}f_2(0)^{F_{n-1}}...f_2(n-2)^{F_1}$$
and $F_n$ is the classical Fibonacci sequence
$$\begin{cases}
   F_1&=1;\\
   F_2&=1;\\
   F_{n}&=F_{n-1}+F_{n-2}.
  \end{cases}$$

Now consider for each $l\in \mathbb N$,
\begin{align*}
  U_2^l&=\{h\in \H: |h(z)-f_2(z)|<\f{\epsilon}{l} \text{ for every } z\in B(0,R)\}\\
  V^l&=\{h\in \H: |h(z)-g(z-2n_0)|<\f{\epsilon}{l} \text{ for every } z\in B(2n_0,R)\},\\
  W^l&=\{h\in \H: |h(z)-\alpha|<\f{\epsilon}{l} \text{ for every } z\in B(n_0,\delta)\},\\
\end{align*}
where $\delta$ is small enough such that $B(0,R)$, $B(2n_0,R)$, $B(n_0,\delta)$ are pairwise disjoint and $\f{1}{\alpha}$ is any $F_{n_0-1}$-th root of the number
$$f_1(0)^{F_{2n_0}}f_2(0)^{F_{2n_0-1}}f_2(1)^{F_{2n_0-2}}\ldots f_2(n_0-1)^{F_{n_0}}g(-n_0+1)^{F_{n_0-2}}\ldots g(-2)^{F_1}.$$

By Runge's Theorem there exists, for each $l$, a function $h_l\in U_2^l\cap V^l\cap W^l$. Thus, $\|h_l-f_2\|_{B(0,R)}\rightarrow 0,\ \|h_l-\tau_{-2n_0}g\|_{B(2n_0,R)}\rightarrow 0$ and $\|h_l-\alpha\|_{B(n_0,\delta)}\rightarrow 0$, as $l\to \infty$. Notice that,
by the choice of $\alpha$, $c_{2n_0}(f_1,h_l)\rightarrow 1$. Therefore we have,
\begin{align*}
\|c_{2n_0}(f_1,h_l)h_l-\tau_{-2n_0}g\|_{B(2n_0,R)}&\leq |c_{2n_0}(f_1,h_l)-1|\cdot\|h_l\|_{B(2n_0,R)}+\|h_l-\tau_{-2n_0}g\|_{B(2n_0,R)}\\
&\leq |c_{2n_0}(f_1,h_l)-1|\cdot\|h_l-\tau_{-2n_0}g\|_{B(2n_0,R)}\\
&\hspace{2cm}+|c_{2n_0}(f_1,h_l)-1|\cdot\|\tau_{-2n_0}g\|_{B(2n_0,R)}+\f{\epsilon}{l}\rightarrow 0.
\end{align*}
So, for large enough $l$, we have that 
$$\|c_{2n_0}(f_1,h_l)h_l-\tau_{-2n_0}g\|_{B(2n_0,R)}<{\epsilon}$$
or equivalently $B^{ 2n_0}(f_1,h_l)\in V$. Since $f_1\in U_1$ and $h_l\in U_2$, we conclude that $B$ is multilinear hypercyclic.
\end{proof}

In \cite{BesCon14} it was shown that the bilinear operator $M\in \LL^2(\H)$, $M(f,g)=f(0)g'$ is a hypercyclic operator (note that, in contrast, its associated homogeneous polynomial $f\mapsto f'(0)f$ is not hypercyclic, see \cite{{CardMurH(C)}}). Here we present a different proof of this fact that we believe is simpler. We also show that the operator is not strongly transitive and thus the set of hypercyclic vectors is not residual. This gives an answer for Question B.

\begin{theorem}\label{Hiper en H(C)}
Let $M(f,g)=f(0)g'(z)$. Then $M$ is hypercyclic and not strongly transitive. 
\end{theorem}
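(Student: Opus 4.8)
The plan is to handle the two assertions separately: hypercyclicity by a direct construction mimicking the Runge-type argument in the previous theorem, and the failure of strong transitivity by producing a nonempty open set of initial conditions whose entire orbit avoids some fixed nonempty open target.

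For hypercyclicity, I would first work out the iteration formula for $M(f,g)=f(0)g'$. Setting $g_0=g$ and $g_n = M$-iterated, one gets $M^n(f,g) = c_n(f,g)\, g^{(n)}$, where $c_n$ is a product of values of $f$ and of derivatives $g^{(j)}(0)$ at the origin, with Fibonacci exponents exactly as in the formula for $B(g,f)(z)=g(0)f(z+1)$ above (the algebraic structure of the recursion $x_n = x_{n-2}(0)\cdot (\text{shift/deriv of } x_{n-1})$ is identical). So to hit a target neighborhood $V$ of $h_0$ starting from neighborhoods $U_1\ni f_1$, $U_2\ni f_2$, I would choose a large $n$ and, using the density of polynomials (or Runge/Taylor approximation) in $\H$, pick a function $g\in U_2$ agreeing with $f_2$ on a disc near $0$ while its $n$-th derivative $g^{(n)}$ approximates the prescribed antiderivative-adjusted target on a disc; since $D=d/dz$ is itself hypercyclic on $\H$ (MacLane) and $V$ can be hit by $g^{(n)}$ for suitable $g$ by the same Runge argument, the only extra ingredient is controlling the scalar $c_n(f_1,g)$, which I would normalize to $1$ by prescribing one more interpolation condition on $g$ at a point disjoint from the other discs — exactly the role played by $\alpha$ and the Fibonacci-root choice in the preceding proof. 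Thus the hypercyclicity proof is essentially a transcription of the previous theorem with $\tau_{-2n_0}$ replaced by antidifferentiation.

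For the failure of strong transitivity, the key observation is that $M$ has a ``limit behaviour'' on a neighborhood of the constant function: I would show there is a nonempty open set $\mathcal U\subseteq \H\times\H$ such that every orbit starting in $\mathcal U$ stays uniformly bounded on a fixed disc $B(0,R)$, hence cannot be dense. Concretely, if $|f(0)|$ and $|g(0)|, |g'(0)|,\dots$ are small enough — say $f,g$ both lie in a small ball around $0$ in the seminorm $\|\cdot\|_{B(0,R)}$ with $R$ modest — then Cauchy estimates bound $g^{(n)}$ on a slightly smaller disc, and the Fibonacci-weighted product $c_n$ decays super-exponentially because each factor is $<1$ while the exponents $F_j$ grow; so $M^n(f,g)\to 0$ on $B(0,R/2)$. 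Therefore the orbit of any pair in this ball is not dense, the hypercyclic vectors are not dense in $\H\times\H$, and $M$ is not strongly transitive; by the characterization recalled before the definition of strong transitivity (transitivity $\Leftrightarrow$ residual set of universal vectors, on a complete metric space with separable target), the set of hypercyclic vectors is then not residual, answering Question B.

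The main obstacle I anticipate is the bookkeeping in the limit-ball argument: unlike the Banach case, $\H$ has no single norm, so I must choose the radius $R$ and the size of the ball around $0$ carefully so that Cauchy's inequality converts smallness on $B(0,R)$ into control of $g^{(n)}$ on a smaller disc, and then check that the Fibonacci growth of the exponents in $c_n$ genuinely dominates any fixed geometric loss coming from the Cauchy estimate $\|g^{(n)}\|_{B(0,r)} \le n!\,r'^{-n}\|g\|_{B(0,r')}$. Making these inequalities compatible — i.e. exhibiting one honest nonempty open box of initial data with a provably bounded (indeed null) orbit — is where the real work lies; the hypercyclicity half is routine given the previous theorem.
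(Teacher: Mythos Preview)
Your non-strong-transitivity half is essentially the paper's argument: exhibit a nonempty open set of initial data $(f,g)$ on which $c_n(f,g)$ decays doubly-exponentially (Fibonacci exponents beat the $n!$ from Cauchy's estimate), so $M^n(f,g)\to 0$ and no such pair is hypercyclic. The paper makes this precise by taking $|f(0)|<\delta$ and $\|g\|_1<1$ (for the seminorm $\|h\|_1=\sup_j |h^{(j)}(0)|/j!$) and proving inductively that $|c_n(f,g)|\le (k\,2^{2^{n/2}})^{-1}$; your sketch points in the right direction and the bookkeeping you flag is exactly what needs to be done.

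The hypercyclicity half, however, has a genuine gap --- and in fact your proposal is internally inconsistent. What you describe is a \emph{transitivity} argument: for arbitrary open $U_1,U_2,V$ you want to find $n$ and $(f,g)\in U_1\times U_2$ with $M^n(f,g)\in V$. If that worked, $M$ would be strongly transitive, contradicting the second half of the theorem you are simultaneously trying to prove. The analogy with the translation operator $B(g,f)=g(0)f(\cdot+1)$ breaks down at precisely the point you try to invoke it: for $B$, the weight $c_n$ depends on the values $f(0),f(1),\dots,f(n-2)$ at \emph{distinct} points, so Runge's theorem lets you prescribe them independently of the behaviour of $f$ on discs far away. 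For $M(f,g)=f(0)g'$, the weight $c_n$ depends on $g(0),g'(0),\dots,g^{(n-2)}(0)$, i.e.\ on the Taylor jet of $g$ at the \emph{single} point $0$. There is no ``disjoint disc'' on which to impose an extra interpolation condition: once $g$ is forced into a small neighborhood $U_2$ of some $f_2$ on $B(0,R)$, all of those derivatives are essentially pinned down by $f_2$, and if $f_2$ happens to be small your own limit-ball argument shows the orbit dies.

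The paper therefore does \emph{not} prove transitivity; it constructs a single explicit universal pair $(1,Q)$. The function $Q$ is built inductively as a power series $Q=\sum b_l z^l/l!$ so that along a sequence $(n_k)$ one has $c_{n_k+1}(Q)=c_{n_k+2}(Q)=1$ (hence $M^{n_k+2}(1,Q)=Q^{(n_k+2)}$) while the next block of Taylor coefficients is chosen to make $Q^{(n_k+2)}$ approximate the $k$-th element of a dense sequence of polynomials. The delicate part is showing that the two normalizing coefficients $\alpha_k,\beta_k$ needed at each stage stay controlled (via a Fibonacci/Vajda identity computation), so that $Q$ is a genuine entire function. This direct construction is the substantive content you are missing.
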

\begin{proof}

We start by computing the orbit of $(f,g)$. We have that $M^n{(f,g)}(z)=c_n(f,g) g^{(n)}(z)$, where
$$c_n(f,g)=f(0)^{F_n}g(0)^{F_{n-1}}\cdot\ldots \cdot g^{(n-2)}(0)^{F_1}$$
and $(F_n)_n$ is the classical Fibonacci sequence.
The weights $c_n$ satisfy also the recursive relations $$\begin{cases}
c_1(f,g)=f(0);\\
c_2(f,g)=f(0)g(0);\\
c_{n+1}(f,g)=c_n(f,g)c_{n-1}(f,g)g^{(n-2)}(0).
\end{cases}$$
 
 We will exhibit an universal vector of the type $(1,f)$. So, we will write $c_n(f)$ instead of $c_n(1,f)$. 
 The idea is to construct a function $Q$ such that for some sequence $(n_j)_j$, $M^{n_j}(1,Q)=Q^{(n_j)}$ (i.e. $c_{n_j}(Q)=1$) and such that   $Q^{(n_j)}-p_j\to 0$ for an appropriate dense sequence $\{p_j\}_j$ in $H(\mathbb C)$. 
 
 For $\lambda\in \C$ , let $\lambda^\f{1}{n}$ be the  $n$-root of $\lambda$ whose argument is $\f{arg(\lambda)}{n}$, so that $\lambda^\f{1}{n}\to 1$ for every $\lambda$.

{Given a polynomial $p(z)=\sum_{l=0}^n \f{z^l}{l!}$ we will consider its usual primitive $I(p):= \sum_{l=1}^{n+1} a_l\f{z^l}{l!}.$} 
 
Let $\{p_n\}_n$  be a dense sequence of polynomials such that $p_n(z)=\sum_{i=0}^n a_{i,n}z^i$ and for $0\leq i\leq n$ we have that $\f{1}{n}\leq |a_{i,n}| \leq n$ and that $a_{i,n}=0$ if $i>n$. 

We will construct our universal function inductively. The first step is simple. Set $n_1=3$. We define $Q_1$ as $\alpha_1+ a_{0,1}z+ a_{1,1} \frac{z^2}{2!}+  \beta_1\frac{z^3}{3!}$,
where $\alpha_1$ and $\beta_1$ are complex numbers such that the weights $c_4(Q_1)$ and $c_5(Q_1)$ are both one. Since $|a_{0,1}|=|a_{1,1}|=1$ it follows that $|\alpha_1|=|\beta_1|=1$. 

Step two: for a large number $n_2$ to be specified later we consider the unique complex numbers $\alpha_2$ and $\beta_2$ such that 
$Q_2=Q_1+\alpha_2 \frac{z^{4}}{4!}+I^{5} p_2 +\sum_{l=8}^{n_2-1}1\frac{z^l}{l!}+\beta_2\f{z^{n_2}}{n_2!}$ 
satisfy that both $c_{n_2+1}(Q_2)$ and $c_{n_2+2}(Q_2)$ are both one.
 We claim that if $C>0$ is such that 
	\begin{align}\label{cte paso inductivo}
     j^\frac{F_{k+1}-1}{F_k}\le C2^j \textrm{ for every }j,k,
	\end{align}
then $\alpha_2<C 2^{n_1+1}$ and, for sufficiently large $n_2$, $\beta_2<C 2^{n_2}$. To show this we notice that, since $c_5(Q_2)$ and $c_4(Q_2)$ are equal to one, then $M^5(1,Q_2)=Q_2^{(5)}=M(1,Q_2^{(4)})$ and $M^6(1,Q_2)=Q_2^{(4)}(0)Q_2^{(6)}=M^2(1,Q_2^{(4)})$, and thus $c_{n}(Q_2)=c_{n-4}(Q_2^{(4)})$ for every $n>0$. 
Therefore if we put $k=n_2-4$, then $c_{n_2}(Q_2)=c_{k}\l \alpha_2 +I(p_2) +\sum_{l=4}^{k-1}1\frac{z^l}{l!}+\beta_2\f{z^{k}}{k!}\r$.
Now $c_{k+1}(Q_2^{(4)})=c_{n_2-4+1}(Q_2^{(4)})=c_{n_2+1}(Q_2)=1$ and therefore
$$\alpha_2^{F_k}\cdot a_{0,2}^{F_{k-1}}\cdot\ldots\cdot a_{2,2}^{F_{k-3}}\cdot 1 \cdot\ldots \cdot 1=1.$$
If we define $\Gamma_{2,k}$ as the number  $|a_{0,2}|^{F_{k-1}}\cdot\ldots\cdot |a_{2,2}|^{F_{k-3}}$ this implies that
$\alpha_2= \Gamma_{2,k}^{-\frac{1}{F_k}}.$ Using that $\frac12\le |a_{i,2}|\leq 2$ and that $\sum_{l=1}^N F_{j}=F_{N+2}-1$, we have that $|\alpha_2|\leq \prod_{l=1}^{k-1} 2^\frac{F_l}{F_k}=2^\frac{F_{k+1}-1}{F_k}\le C 2^2 =C 2^{n_1+1}$.
            
Now we look at the condition $c_{k+2}(Q_2^{(4)})=1$ to obtain
$$\alpha_2^{F_{k+1}}\cdot a_{0,2}^{F_k}\cdot\ldots a_{2,2}^{F_{k-2}}\cdot 1\ldots \cdot 1 \cdot \beta_2=1.$$
Hence
$$|\beta_2|={\Gamma_{2,k}}^\frac{F_{k+1}}{F_k}\cdot \frac{1}{\Gamma_{2,k+1}}.$$
Now we compute this number using the Vajda's identity, \cite[Appendix (20a)]{Vaj08} 
$$F_{N+i}F_{N+j}-F_NF_{N+i+j}=(-1)^N F_{i}F_j.$$
Applying the above formula for each $N=k-l-1$, $i=1$ and $j=l+1$ we get
\begin{align*}
|\beta_2|&={\Gamma_{2,k}}^\frac{F_{k+1}}{F_k}\cdot \frac{1}{\Gamma_{2,k+1}}=\prod_{l=0}^2 a_{0,l}^{F_{k-l-1}\frac{F_{k+1}}{F_k}-F_{k-l}}\\
&=\prod_{l=0}^2 a_{0,l}^{\frac{(-1)^{k-l}F_{l+1}}{F_k}}\to 1 \text { as } k\to \infty.
\end{align*}
Therefore if $n_2$ is sufficiently large, $|\beta_2|\leq C 2^{n_2}$.

Step three (inductive step): suppose that we have defined $Q_1,\ldots Q_k$, $\alpha_1,\ldots \alpha_k$, $\beta_1,\ldots,\beta_k$, and numbers $n_1,\ldots, n_k$ such that for each $1\le j< k$,
\begin{enumerate}
\item $Q_{j+1}=Q_j+\alpha_{j+1}\frac{z^{n_{j}+1}}{(n_{j}+1)!}+I^{n_j+2}(P_{j+1})
  +\sum_{l=n_j+j+4}^{n_3-1} 1\cdot \frac{z^l}{l!}+ 
  \beta_{n_{j+1}}\frac{z^{n_{j+1}}}{n_{j+1}!}$;
\item $c_{n_{j+1}+1}(Q_{j+1})=c_{n_{j+1}+2}(Q_{j+1})=1$;
\item If $C$ is as in \eqref{cte paso inductivo} the same constant as in the previous step, $|\alpha_{j+1}|\leq C 2^{n_{j}+1}$ and
 $|\beta_{j+1}|\leq C 2^{n_{j+1}}$. 
\end{enumerate}
 
The construction of $Q_{k+1}$ satisfying conditions 1, 2 and 3 is achieved following exactly as in the second step, so we omit the details. 

Let $Q=\lim_k Q_k$. It is well defined because for each $l\le k$ we have that $|Q_k^{(l)}|\leq C2^l$. 

To show that $Q$ is universal we will show that $c_{n_k+2}(Q)Q^{(n_k+2)}-p_k\to 0$. We  consider the fundamental system of continuous seminorms given by 
 \begin{equation}\label{seminormas H(C)}
\|f\|_k= \sup_{j\geq 0} |a_n| \f{k^j}{j!},
 \end{equation}
  where, $k\in\mathbb N$ and $f(z)=\sum_{n=0}^\infty a_nz^n$. This seminorms generate the usual topology on $H(\mathbb C)$
(see for example \cite[Example 27.27]{MeiVog97} or \cite{Per99}).

By Condition 2, $c_{n_k+2}(Q)=1$. Since the first $k$ derivatives evaluated at zero of $Q^{(n_k)+2}$ and $p_k$ are equal, and the derivatives of $Q^{(n_k)+2}$ at zero between $k+1$ and $n_{k+1}-1$ are equal to one, we have that for each $n$, 
\begin{align*}
\|c_{n_k+2}(Q)Q^{(n_k+2)}-p_k\|_n&=\sup_{j>k} \left|Q^{(n_k+2)+j}(0)\right| \frac{n^j}{j!}
\leq \max\left\{\frac{n^k}{k!}, \sup_{j\geq n_{k+1}}C 2^{n_k+j+2}\frac{n^j}{j!}\right\}\to 0 \quad \textrm{as }k\to\infty.\\
\end{align*} 

To show that the bilinear operator is not strongly hypercyclic it suffices to prove that the set of hypercyclic vectors is not dense.
We claim that that there is some $\delta>0$ such that if $|f(0)|<\delta$ and if $\|g\|_1<1$, then $M^n(f,g)\to 0$. Therefore, $(f,g)$ is not a hypercyclic vector.

Let $k\geq 1$ such that $k2^{2^\f{n}{2}}\geq (n-2)!2^{2^\f{n-1}{2}}$ for every $n$ and let $\delta=\f{1}{k2^{2}}$. Since $\|g\|_1<1$ we have that $|g^{(n)}(0)|\leq n!$ for every $n$. We claim that $|c_n(f,g)|\leq \f{1}{k2^{2^\f{n}{2}}}$ for every $n$. Indeed, for $n=1$ we have that $|c_1(f,g)|=|f(0)|<\delta<\f{1}{k2^{2^\f{1}{2}}}$ and for $n=2$ we have that $|c_2(f,g)|=|f(0)g(0)|<\delta\cdot 1=\f{1}{k2^{2}}$. Suppose that our claim is true for $n\geq 2$. Then
\begin{align*}
c_{n+1}(f,g)&=c_n(f,g)c_{n-1}(f,g) g^{(n-2)}(0)\leq \f{(n-2)!}{k2^{2^\f{n}{2}} k2^{2^\f{n-1}{2}}}\\
&\leq \f{1}{k^2(2^{2^\f{n-1}{2}})^2}\leq \f{1}{ k2^{2^\f{n+1}{2}}}. 
\end{align*}

Applying the Cauchy inequalities we obtain that

\begin{align*}
\|M^n(f,g)\|_R&=\|c_n(f,g)g^{(n)}\|_R\leq \f{1}{k2^{2^\f{n}{2}}} \|g^{(n)}\|_R\leq \f{n!}{R^{n+1}2\pi k2^{2^\f{n}{2}}} \|g\|_{2R}\to 0.
\end{align*}
\end{proof}

We finalize the section with a last example in $\H$. We will show that the bilinear operator $N(f,g)=g(0)\cdot f'$ is hypercyclic. The dynamics induced by this operator and its transpose $M(f,g)=f(0)g'$ are quite different. Indeed, while in $M$ only $g$ and the number $f(0)$ determine the orbit of $(f,g)$, in $N$ both $f$ and $g$ are relevant.  

\begin{example}\label{delta D}
Let $N\in\LL(^2\H)$ be the bilinear operator $N(f,g)=g(0)D(f)$, where $D$ is the derivation operator. Then $N$ is hypercyclic and not strongly transitive.
\end{example}
\begin{proof}
The orbit with initial conditions $(f,g)$ is
 \begin{equation}\label{OrbDerivar}
 N^{ n}(f,g)=\begin{cases}
                   c_{n}(f,g) D^\frac{n+1}{2}(f) & \text{ if } n \text{ is odd;}\\
                   c_{n}(f,g)D^\frac{n}{2}(g) & \text{ if } n \text{ is even.}
                   \end{cases}
 \end{equation}
Where $c_n(f,g)$ is defined as 
$$c_n(f,g)=\begin{cases}
            g(0)^{F_n}f'(0)^{F_{n-1}}g'(0)^{F_{n-2}}f^{(2)}(0)^{F_{n-3}}\ldots g^{(\f{n-1}{2})}(0)^{F_1} & \text{ if } n \text{ is odd};\\
            g(0)^{F_n}f'(0)^{F_{n-1}}g'(0)^{F_{n-2}}f^{(2)}(0)^{F_{n-3}}\ldots g^{(\f{n-2}{2})}(0)^{F_2}  f^{(\f{n}{2})}(0)^{F_1} & \text{ if } n \text{ is even,}\\
           \end{cases}$$
 and the $F_n$ are the usual Fibonacci numbers.          
The weights $c_n(f,g)$ may be seen in the following way. Consider $h(z)$ the entire function $h(z)=\sum_{j=0}^\infty \frac{g^{(j)}(0)}{(2j)!}z^{2j}+\sum_{j=1}^\infty \frac{f^{(j)}(0)}{(2j-1)!}z^{2j-1}$. Thus, $h$ is a merge between $g$ and $f$, satisfying  $h^{(n)}(0)=g^{(\frac{n}{2})}(0)$ if $n$ is even and $h^{(n)}(0)=f^{(\frac{n+1}{2})}(0)$ if $n$ is odd.
If we define $c_n:\H\to\C$ as 
$c_n(\tilde f)= 
\tilde f (0)^{F_n}\cdot\ldots  \tilde f^{(n-1)}(0)^{F_1}$, then $c_n(h)=c_n(g,f)$. Since $c_n(\tilde f)$ satisfies the recurrence relation $c_{n+1}(\tilde f)=c_n(\tilde f)\cdot c_{n-1}(\tilde f)\cdot \tilde f^{(n)}(0),$  $c_n(f,g)$ satisfies
$$
c_{n+1}(f,g)=\begin{cases}
c_n(f,g)\cdot c_{n-1}(f,g)\cdot g^{(\f{n-1}{2})}(0)&\text{ if } n \text{ is odd;}\\
c_n(f,g)\cdot c_{n-1}(f,g)\cdot f^{(\f{n}{2})}(0)&\text{ if } n \text{ is even.}
\end{cases}$$
           
We focus our attention only in the even iterations and forget the odd ones. If we construct $(f,g)$ such that the even iterations are dense, then the whole orbit will be dense. We rewrite the even iterations as $D^n(g) c_{2n}(f,g)$. Notice that this is a universal operator multiplied by certain weights depending on both $f$ and $g$. Therefore, if we find a $D$-hypercyclic vector $g$ and a function $f\in\H$ so that $c_{2n}(f,g)=1$, then the orbit will be dense. 

Suppose now that $g\in\H$ is fixed and $g\sim \sum a_n \f{z^n}{n!}$. We claim that $f\sim \sum b_n \f{z^n}{n!}$, where $b_n= \prod_{i< n} a_i^{-1}$, satisfies $c_{2n}(f,g)=1$.

Indeed, we use the well known identity 
\begin{equation}\label{eqfib1}
 F_{2n}=\sum_{j=1}^{n} F_{2j-1}.
\end{equation}
Hence,
\begin{align*}
c_{2n}(f,g)&=a_0^{F_{2n}}b_1^{F_{2n-1}}a_1^ {F_{2n-2}}\ldots a_{n-1}^{F_1}b_n^{F_1}\\
&=a_0^{F_{2n}}(a_0^{-1})^{F_{2n-1}}a_1^ {F_{2n-2}} (a_0^{-1}a_1^{-1})^F_{2n-3}\ldots a_{n-1}^{F_1}(a_0^{-1}\ldots a_{n-1}^{-1})^{F_1}\\
&=a_0^{F_{2n}-\sum_{j\leq n} F_{2j-1}}\cdot \ldots \cdot a_{n-1}^{F_2-F_1}=1.
\end{align*}

The proof finalize by constructing a $D$-hypercyclic function $g$ such that its associated function $f$ is well defined. The $D$-hypercyclic function $g$ can be constructed as follows. Let $\{P_n\}_n$ be a dense sequence of polynomials that satisfy that $deg(P_n)=n$,  $P_n=\sum_{j=1}^{n} \alpha_{n,j} \f{z^j}{j!}$, with $\alpha_{n,j}\neq 0$ and $n>\alpha_{n,j}>\f{1}{n}$ for $j\le n$. Let $k_1=0$ and $k_n=\sum_{j<n} j$ for $n>1$.  
For a polynomial $P(z)=\sum_{j=0}^n \f{a_j}{j!}z^n$ we consider its usual primitive $I(p)=\sum_{j=1}^{n+1} \f{a_j}{j!}z^j.$
We claim that $g=\sum_{n} I^{k_n}(P_n)$ is hypercyclic. It is easy to see that $g$ is a well defined entire function. 
To prove
that it is hypercyclic, we will use the seminorms defined in \eqref{seminormas H(C)}. Then
\begin{align*}
\|D^{k_n}g-P_n\|_k&=\|\sum_{j=n+1} I^{k_j-k_n} P_{j}\|_k\\
&\leq \sum_{j=n+1} \|I^{k_j-k_n} P_{j}\|_k\\
&<\sum_{j=n+1} j\f{k^j}{j!}\to 0.
\end{align*}
Finally we prove that $f$ is well defined. Observe that if $g(z)=\sum_na_nz^n$, then  $a_n=\alpha_{l,j}$ for $n=k_l+j$.  
Therefore, $|a_n|>\frac1{l}>\frac{1}{(2n)^{\f{1}{2}}}$.
This implies that for each $k\in\mathbb N$,
\begin{align*}
\|f\|_k&=\sup_j \l\prod_{i< j} a_i^{-1}\r \f{k^j}{j!}< \sup_j 2^{\f{1}{2}}j!^{\f{1}{2}}\f{k^j}{j!}<\infty.
\end{align*}

To show that $N$ is not strongly hypercyclic it suffices to show that the set of hypercyclic vectors is not dense in $\H\times \H$. In the proof of Theorem \ref{Hiper en H(C)} we showed that there is some $\delta>0$ and $k>1$ such that for every $f,g$ such that $|f(0)|<\delta$ and $\|g\|_1<1$, we have $\tilde c_n(f,g):= \left |f(0)^{F_n}g(0)^{F_{n-1}}\ldots g^{(n-2)}(0)\right |\leq \f{1}{k2^{2^\f{n}{2}}}$. Thus, if $h(z)=\sum_{j=0}^\infty \frac{g^{(j)}(0)}{(2j)!}z^{2j}+\sum_{j=1}^\infty \frac{f^{(j)}(0)}{(2j-1)!}z^{2j-1}$ we have that if $|h(0)|<\delta$ and $\|h'\|_1<1$, then for every $n\in \mathbb N$, $|c_n(f,g)|=\tilde c_n(h,h')<\f{1}{k2^{2^\f{n}{2}}}$.
In this case we obtain, by the Cauchy inequalities and by \eqref{OrbDerivar}, that
\begin{align*}
\|N^n(f,g)\|_R\leq \f{n!\max\{\|f\|_{R+1},\|g\|_{R+1}\}}{k2^{2^\f{n}{2}}}\to 0.
\end{align*}
Since the set of pairs $(f,g)\in \H\times \H$ such that the function $h$ (defined as above) satisfies  $|h(0)|<\delta$ and $\|h'\|_1<1$ is open in $\H\times \H$, it follows that the set of hypercyclic vectors is not residual.
\end{proof}    
\section{Multilinear hypercyclic operators on arbitrary Banach spaces}\label{SecBan}

In this section we prove our main result,  Theorem \ref{main Theorem}. It establishes that hypercyclic multilinear operators may be found in arbitrary separable and infinite dimensional Banach spaces,  giving a positive answer to Question C. This implies that  there are hypercyclic multilinear operators that are not strongly transitive and whose hypercyclic vectors are not residual.

\subsection{ A first example in $\ell_p$}

Since the backward shift on $\ell_p$ operates like the differentiation operator on $\H$, from the results on the previous section the bilinear mapping
 $M(x,y)=e_1'(y)B(x)$ is a good candidate to be hypercyclic on $\ell_p$, $1\leq p<\infty$. This is indeed the case.
 \begin{example}\label{Ejemplo en c_0}
 Let $X=\ell_p$ or $c_0$, $p< \infty$ and $M(x,y)=e_1'(y)B(x)$. Then $M$ is hypercyclic.
\end{example}

Given vectors $x,y\in X$, the iterations $M^n(x,y)$ are
$$M^{ n}(x,y)=\begin{cases}
c_{n}(x,y)B^\frac{n+1}{2}(x) & \text{ if } n \text{ is odd;}\\
 c_{n}(x,y)B^\frac{n}{2}(y)& \text{ if } n \text{ is even.}
\end{cases}$$
Where $c_n(x,y)$ is defined as 
$$c_n(x,y)=\begin{cases}
y_1^{F_n}x_2^{F_{n-1}}y_2^{F_{n-2}}x_3^{F_{n-3}}\ldots y_{\f{n-1}{2}}^{F_1} & \text{ if } n \text{ is odd};\\
y_1^{F_n}x_2^{F_{n-1}}y_2^{F_{n-2}}x_3^{F_{n-3}}\ldots y_{\f{n-2}{2}}^{F_1} x_{\f{n}{2}}^{F_1} & \text{ if } n \text{ is even}\\
\end{cases}$$
and the $F_n$ are the usual Fibonacci numbers.

If we prove that the even iterations are dense, then the whole orbit will be dense. 
Note that if we are able to construct a hypercyclic vector $y$ for $2B$ and a well-defined vector $x$ such that $c_{2n}(x,y)=2^n$ then the even iterations $c_{2n}(x,y)B^n(y)$ form a dense sequence.

   To construct this vectors we will need the following lemma.

\begin{lemma}\label{construccion 1}
Let $X=\ell_p$ or $c_0$, $p< \infty$. 
Let $(a_n)_n\subset \mathbb C$ be the sequence such that $c_{2n}((2^{a_k})_k,1)=2^n$.
Then there exists a hypercyclic vector $y$ for $2B$ such that the vector $$z_{i+1}:=2^{a_i}\cdot\prod_{j\leq i}{y_j^{-1}}\in X.$$ 
\end{lemma}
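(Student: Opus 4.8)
The plan is to construct the hypercyclic vector $y$ for $2B$ by a careful greedy/inductive argument that reserves long blocks of coordinates far out in the sequence, exactly as in the construction of $D$-hypercyclic functions in $\H$ done earlier for Example \ref{delta D}, but now keeping simultaneous control on the partial products $\prod_{j\le i} y_j$ so that the auxiliary vector $z=(z_{i+1})_i$ with $z_{i+1}=2^{a_i}\prod_{j\le i}y_j^{-1}$ lies in $X$. First I would unwind what the recursion for $c_{2n}$ forces on the exponents $(a_n)_n$: since $c_{2n}((2^{a_k})_k,1)=2^n$ and $c_n$ satisfies the Fibonacci-type recursion $c_{n+1}=c_nc_{n-1}\tilde f^{(n)}(0)$ coming from the merged-sequence description, the sequence $(a_n)$ is determined by a linear Fibonacci recurrence; an easy induction (using $\sum_{j\le n}F_{2j-1}=F_{2n}$ as in \eqref{eqfib1}) shows $a_n$ grows like a constant times $F_n$, hence $|2^{a_n}|\le C^{F_n}$ for a fixed constant $C$, and more importantly the partial sums $\sum_{j\le i}a_j$ are controlled.

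Next I would fix a dense sequence of finitely supported vectors $\{p_n\}_n$ in $X$ with the normalization that every nonzero coordinate $[p_n]_j$ satisfies $\tfrac1n\le |[p_n]_j|\le n$ (exactly the analogue of the polynomials $P_n$ with $\tfrac1n\le|\alpha_{n,j}|\le n$), choose a rapidly increasing sequence of indices $k_n$, and build $y$ blockwise: on the block of coordinates starting at $k_n$ we place a rescaled copy of $p_n$ so that $2B^{k_n}$ applied to $y$ approximates $p_n$ (this is the standard backward-shift hypercyclicity mechanism and gives $\|2^{k_n}B^{k_n}y-p_n\|\to 0$, making $y$ hypercyclic for $2B$), while on the filler coordinates between blocks we put values chosen so that the required product identity $c_{2n}(x,y)=2^n$ holds and so that the tail of $y$ remains summable/null. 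The filler values are forced (up to roots of unity) by the $c_n$-recursion, just as $\alpha_{j},\beta_j$ were forced in the proof of Theorem \ref{Hiper en H(C)}; the lower bound $|[p_n]_j|\ge\tfrac1n$ guarantees the inverse products do not blow up too fast.

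Finally I would verify that $z\in X$: using $z_{i+1}=2^{a_i}\prod_{j\le i}y_j^{-1}$, estimate $|z_{i+1}|$ by combining the growth estimate $|2^{a_i}|\le C^{F_i}$ with a lower bound on $|\prod_{j\le i}y_j|$. The lower bound comes from the normalization of the $p_n$'s and a suitable (possibly super-exponentially fast) choice of the block lengths $k_n$, which makes $\sum_i|z_{i+1}|^p$ (or $\sup_i|z_{i+1}|\to 0$ in the $c_0$ case) converge; this is the same device as the estimate $\|f\|_k=\sup_j(\prod_{i<j}a_i^{-1})k^j/j!<\infty$ at the end of the proof of Example \ref{delta D}. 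The main obstacle is the simultaneous bookkeeping: the filler coordinates are dictated by the $c_n$-recursion and therefore are not free, so I must choose the index gaps $k_{n+1}-k_n$ large enough that the forced filler values neither destroy membership of $y$ in $X$ nor membership of $z$ in $X$, and at the same time small constraints from the hypercyclicity approximation $\|2^{k_n}B^{k_n}y-p_n\|\to 0$ must still be met. Once the numerics are arranged so that all three requirements (hypercyclicity of $y$ for $2B$, well-definedness of $y\in X$, well-definedness of $z\in X$) hold, the lemma follows.
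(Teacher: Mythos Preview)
Your proposal has a genuine gap at the very first step: the behavior of the sequence $(a_n)$. You assert that $a_n$ ``grows like a constant times $F_n$'' and then use the bound $|2^{a_n}|\le C^{F_n}$. This is false and, more importantly, points in the wrong direction. The recurrence forced by $c_{2n}((2^{a_k})_k,1)=2^n$ is exactly the one in Lemma~\ref{Lema a_n 1}, whose solution is the \emph{quadratic polynomial} $a_n=1-\tfrac{n(n-1)}{2}$. Thus $2^{a_n}$ does not grow; it decays like $2^{-n^2/2}$. This super-fast decay is the entire mechanism that allows $z_{i+1}=2^{a_i}\prod_{j\le i}y_j^{-1}$ to lie in $X$: since $y\in\ell_p$ (or $c_0$) forces $y_j\to 0$, the product $\prod_{j\le i}y_j^{-1}$ necessarily blows up, and only the factor $2^{a_i}\sim 2^{-i^2/2}$ can kill it. An upper bound $|2^{a_i}|\le C^{F_i}$ gives nothing here.

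There is a second structural confusion. You write that ``on the filler coordinates between blocks we put values chosen so that the required product identity $c_{2n}(x,y)=2^n$ holds,'' by analogy with the forced constants $\alpha_j,\beta_j$ in Theorem~\ref{Hiper en H(C)}. But in the present lemma nothing about $y$ is forced by any recursion: $z$ is \emph{defined} from $y$ by the formula $z_{i+1}=2^{a_i}\prod_{j\le i}y_j^{-1}$, and the identity $c_{2n}(z,y)=2^n$ is then automatic (this is the computation done in the proof of Example~\ref{Ejemplo en c_0}). The filler coordinates of $y$ are therefore \emph{free parameters}, and the real task is to choose them in a calibrated way---small enough that $y\in X$ and that $(2^nB^n y)$ is universal, yet not so small that $\prod_{j\le i}y_j^{-1}$ outruns $2^{-i^2/2}$. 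In the paper's construction (Theorem~\ref{construccion 2}) the fillers are taken of the form $\tfrac{1}{l^2 2^{n_{k-1}}}$ on the gap $[n_{k-1}+k,\,n_k]$, and the gap endpoints $n_k$ are chosen inductively to satisfy explicit inequalities (conditions \ref{hipers2} and \ref{n_js2}) that make all three requirements hold simultaneously. Your sketch does not contain this balancing, and with the wrong growth of $a_n$ it cannot be carried out.
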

 The construction of the universal vector is actually a simplified version of what is done in the next subsection, so we omit its proof here and refer the reader to Theorem \ref{construccion 2} (see also Lemma \ref{Lema a_n 1}).

\begin{proof}[Proof of Example \ref{Ejemplo en c_0}]
	If we define, like in Example \ref{delta D}, $x_{i+1}=\prod_{j\leq i}{y_j^{-1}}$, it follows that $c_{2n}(x,y)=1$. Thus, if $z$ is the sequence $(x_k2^{a_k})_k$, then $c_{2n}(z,y)=c_{2n}(x,y)c_{2n}((2^{a_k})_k,1)$. Therefore it suffices to find a vector $y$ and a sequence $(a_k)_k$ such that $c_{2n}((2^{a_k})_k,1)=2^n$, such that $y$ is a hypercyclic vector for $2B$ and such that $z_{i+1}=2^{a_i}x_{i+1}=2^{a_i}\prod_{j\leq i}{y_j^{-1}}$ defines a vector in $X$.
	The existence of the vector $y$ as needed is guaranteed by the previous lemma.
\end{proof}

\subsection{Hypercyclic bilinear operators on arbitrary Banach spaces}

We prove in this section our main result.
\begin{theorem}\label{main Theorem}
	Let $X$ be a separable infinite dimensional Banach space. Then for every $m\geq 1$ there exist an $m$-multilinear hypercyclic operator acting on $X$.  
\end{theorem}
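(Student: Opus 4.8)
The strategy is to reduce the general separable infinite-dimensional Banach space case to the model situation treated in Example \ref{Ejemplo en c_0} and Lemma \ref{construccion 1}, by transporting the weighted backward-shift dynamics along a suitable biorthogonal-type system. First I would fix a normalized basic sequence $(e_n)_{n\ge 1}$ in $X$ with coefficient functionals $(e_n')$; by a theorem of Ovsepian--Pe\l czy\'nski (or the simpler fact that every infinite-dimensional Banach space contains a basic sequence, combined with a Markushevich basis) one obtains a fundamental and total minimal system, so that $\overline{\operatorname{span}}\{e_n\}=X$ and the maps $x\mapsto e_n'(x)$ are continuous. The candidate operator is the $m$-linear analogue of $M(x,y)=e_1'(y)B(x)$: on the closed span of the $e_n$ one declares $M(x^{(1)},\dots,x^{(m)})=e_1'(x^{(1)})\cdots e_1'(x^{(m-1)})\,\widetilde B(x^{(m)})$, where $\widetilde B$ is the (unbounded on span, but we rescale) backward shift sending $e_{n+1}\mapsto \lambda_n e_n$ with weights $\lambda_n$ chosen small enough to guarantee boundedness of $M$ on all of $X^m$, and extends by continuity. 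The point is that the orbit of $(x^{(1)},\dots,x^{(m)})$ again factors as a scalar weight $c_n$ (a Fibonacci-type product of coordinates) times a power of the weighted shift applied to one fixed argument, exactly as in the $\H$ and $\ell_p$ computations carried out above.

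Next I would invoke the constructive core, namely Lemma \ref{construccion 1} in its full strength (Theorem \ref{construccion 2} / Lemma \ref{Lema a_n 1} referenced in the text): one needs a vector $y$ that is hypercyclic for the weighted shift $\mu\widetilde B$ for an appropriate scalar $\mu$ and, simultaneously, such that the "correcting" vector $z$ with coordinates $z_{i+1}=2^{a_i}\prod_{j\le i} y_j^{-1}$ (with $a_i$ the prescribed sequence making $c_{2n}((2^{a_k})_k,1)$ equal the target $2^n$) lies in $X$. The hypercyclicity of $y$ for the weighted shift follows from a Salas/Grosse-Erdmann-type criterion once the weights are summable in the relevant sense; the delicate part is the \emph{joint} requirement that $z$ converges in $X$, which forces the backward-shift weights $\lambda_n$ to decay fast enough to beat the growth of $\prod y_j^{-1}$. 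This is achieved by building $y$ and $\lambda_n$ simultaneously by an inductive block construction, as in the proof of Theorem \ref{construccion 2}: at stage $k$ one appends a block approximating the $k$-th element of a fixed dense sequence, controls the coordinates from below (so the inverse products do not blow up too fast), and chooses the next weights small.

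For general $m\ge 1$ the only change is that $c_n$ becomes a different Fibonacci-type product in $m$ arguments, but the same reduction works: take $x^{(1)},\dots,x^{(m-1)}$ with first coordinate a fixed nonzero scalar and put all the dynamical content into $x^{(m)}=y$ together with a single correction vector, so the multilinear identities collapse to the bilinear ones after re-indexing. I expect the main obstacle to be precisely the simultaneous bookkeeping in the inductive construction: ensuring that the \emph{same} choice of weights $(\lambda_n)$ makes $M$ bounded on $X^m$, makes $y$ hypercyclic for the associated weighted shift, and keeps the correction vector $z$ in $X$ — three competing estimates (an upper bound on weights from boundedness, a density/approximation requirement from hypercyclicity, and a summability requirement from $z\in X$) that must be reconciled stage by stage. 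Once the bilinear case ($m=2$) is settled in this way, the passage to arbitrary $m$ is routine, so I would present the $m=2$ construction in detail and then indicate the straightforward modification.
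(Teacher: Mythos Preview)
Your proposal is essentially correct and follows the same blueprint as the paper: use a bounded Markushevich basis on $X$, define a weighted backward-shift bilinear operator, and invoke the Fibonacci-type correction of Theorem \ref{construccion 2} and Lemma \ref{Lema a_n 1} to produce the hypercyclic pair. The paper's proof differs from yours mainly in its packaging. Rather than carrying out the inductive construction directly in $X$, the paper works entirely in $\ell_1$ with the \emph{fixed} weights $\omega_n=1/n^2$ (Theorem \ref{En Banach}), and then transfers the result to $X$ via an explicit quasiconjugation (Proposition \ref{Conj} and Theorem \ref{quasiconj en cualquier Frechet}) through the map $\phi\colon\ell_1\to X$, $\phi((a_n))=\sum a_n x_n$. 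This separation has two advantages: it avoids any appeal to a Salas-type criterion for shifts on $X$ (which would be delicate since a Markushevich basis is not a Schauder basis, so norms in $X$ are not computable from coordinates), and it makes clear that the weights need not be tuned during the construction---the single choice $\omega_n=1/n^2$ already balances your ``three competing estimates''. Your suggestion to build $y$ and $\lambda_n$ simultaneously is therefore unnecessary; once you realize the construction lives in $\ell_1$ and is pushed forward by $\phi$, the bookkeeping collapses to exactly what Theorem \ref{construccion 2} provides.
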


The case $m=1$ is due to Ansari-Bernal \cite{ansari1997existence,bernal1999hypercyclic}, who  independently proved the existence of hypercyclic operators on arbitrary infinite dimensional separable Banach spaces. Later on Bonet and Peris \cite{BonePer98} generalized the result to Fr\'echet spaces. We will prove Theorem \ref{main Theorem} only for $m=2$, being the case $m>2$ analogous but the notation more technical.

The proof of Theorem \ref{main Theorem} will be divided in three different steps. First we will prove that certain weighted multilinear operator $M$ defined over $\ell_1$ is hypercyclic. Afterwards we will give a notion of quasiconjugation for multilinear operators, and prove that hypercyclity is preserved under quasiconjugation. This definition coincides with the one given by Grosse-Erdmann and Kim for bihypercyclity \cite{GroKim13}. Finally we will show that each separable infinite dimensional Banach space supports a multilinear operator quasiconjugated to $M$.  

The multilinear operator we are looking for is a generalization of Example \ref{delta D} to the Banach space $\ell_1$. Effectively this bilinear operator is hypercyclic. However, since we are looking for quasiconjugation in arbitrary Banach spaces, we need to weight the backwardshift. At the same time, the weights can not tend to zero too fast since we would loose hypercyclicity.
  
\subsubsection{First step} 
   
\begin{theorem}\label{En Banach}
Let $\omega=(1,\f{1}{2^2},\frac{1}{3^2},\frac{1}{4^2},\ldots)$ and $B_\omega: \ell_1\to \ell_1$ defined as $[B_\omega(x)]_i=\omega_i x_{i+1}$. The multilinear operator 
$M(x,y)=e_1'(y)B_\omega(x)$ is hypercyclic.
\end{theorem}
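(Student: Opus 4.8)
The plan is to follow the scheme of Example~\ref{delta D} and Example~\ref{Ejemplo en c_0}, replacing the differentiation operator (respectively the unweighted backward shift) by $B_\omega$. First I would show, by an induction exactly as in those examples, that
\begin{equation*}
M^n(x,y)=\begin{cases} c_n(x,y)\,B_\omega^{\frac{n+1}{2}}(x) & n\text{ odd},\\ c_n(x,y)\,B_\omega^{\frac n2}(y) & n\text{ even},\end{cases}
\end{equation*}
where $c_1(x,y)=y_1$, $c_2(x,y)=y_1x_2$, and for $n\ge 2$ the scalars satisfy the recursion $c_{n+1}(x,y)=c_{n-1}(x,y)c_n(x,y)\rho_n$, with $\rho_n$ a truncated block of the $\omega_j$ times the appropriate coordinate of $x$ (if $n$ is odd) or of $y$ (if $n$ is even); so each $c_n(x,y)$ is a monomial, with Fibonacci exponents, in the $\omega_j$ and in finitely many coordinates of $x$ and $y$. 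Since $\omega_1=1$ and $\prod_{j=1}^k\omega_j=\frac{1}{(k!)^2}$, the even iterations read $M^{2n}(x,y)=c_{2n}(x,y)B_\omega^n(y)$ with $[B_\omega^n(y)]_i=\frac{((i-1)!)^2}{((i+n-1)!)^2}y_{i+n}$; as the even iterations form a subsequence of the orbit, it suffices to make them dense, so the problem reduces to producing $(x,y)\in\ell_1\times\ell_1$ with $\{c_{2n}(x,y)B_\omega^n(y):n\in\zN\}$ dense in $\ell_1$.

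Constructing such a pair is the technical heart of the proof, and generalizes the constructions of $Q$ in the proof of Theorem~\ref{Hiper en H(C)} and of $g$ in Example~\ref{delta D}. Although $B_\omega^n\to 0$ in operator norm (so $B_\omega$ is far from hypercyclic on $\ell_1$), the scalars $c_{2n}(x,y)$ can be made to grow doubly exponentially in $n$ — this is where the Fibonacci structure of the recursion is exploited — which compensates the loss. I would build $y$ block by block along a sparse sequence of active steps $n_1<n_2<\cdots$: on the $j$-th block the finitely many coordinates of $y$ that $B_\omega^{n_j}$ reads are prescribed so that $c_{2n_j}(x,y)B_\omega^{n_j}(y)$ is as close as we wish to the $j$-th term of a fixed dense sequence of finitely supported vectors, and a (long) range of coordinates of $y$ just after the block is damped so that the tail of the $n_j$-th iterate is negligible; at the same time $x$ is defined from $y$ — in the spirit of Example~\ref{delta D}, using the Fibonacci identities and the factorial corrections $(n!)^{2}=(\prod_{j\le n}\omega_j)^{-1}$ coming from the weights — so that $c_{2n}(x,y)$ has the right growth while both $x$ and $y$ remain in $\ell_1$. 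This is exactly the content of Theorem~\ref{construccion 2} below, which I would invoke to conclude: it furnishes $(x,y)\in\ell_1\times\ell_1$ with $\{c_{2n}(x,y)B_\omega^n(y):n\in\zN\}$ dense in $\ell_1$, whence $(x,y)$ is a hypercyclic vector for $M$.

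The main obstacle — exactly what Theorem~\ref{construccion 2} settles — is this simultaneous control: $c_{2n}(x,y)$ must be kept large enough at the active steps to overcome the factor $\frac{1}{(n!)^2}$ lost by $B_\omega^n$, while the coordinates of $x$ and the damping coordinates of $y$ are kept summable, against the competing Fibonacci-exponential and factorial scales. This balance is what forces the choice $\omega_k=\frac1{k^2}$: decaying, so that a quasiconjugation onto an arbitrary separable infinite-dimensional Banach space becomes available in the later steps of the argument, but not so fast that hypercyclicity is lost. Granting the estimates of Theorem~\ref{construccion 2}, the statement follows at once.
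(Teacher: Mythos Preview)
Your plan is correct and matches the paper's approach: reduce to the even iterates, define $x$ from $y$ so that the scalar in front of $B_\omega^n(y)$ equals $2^n(n!)^2$, and then invoke Theorem~\ref{construccion 2} for the existence of a suitable $y$ with $x\in\ell_1$. The one point you gloss over is the algebraic verification that the specific $x$ produced by Theorem~\ref{construccion 2} actually makes the scalar equal to $2^n(n!)^2$; the paper carries this out by splitting off the weight-dependent factor $d_n(\omega)$ from $c_n(x,y)$ and using Lemma~\ref{Lema a_n 1}, which identifies the auxiliary sequence $a_n=1-\tfrac{n(n-1)}{2}$ appearing in the definition of $x$.
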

We first show the existence of a universal vector for a family of weighted shifts.
\begin{theorem}\label{construccion 2}
Let $a_n=1-\frac{n(n-1)}{2}$ and $B_\omega$ the weighted backward shift over $\ell_1$ with weights $\omega_n=\f{1}{n^2}$.
There exists an universal vector $y\in\ell_1$ for $2^{n}n!^2B_\omega^n$ so that the vector $x_{i+1}=2^{a_i}i!^2i^2
\prod_{j\leq i}y_{j}^{-1}$ is well defined in $\ell_1$.
\end{theorem}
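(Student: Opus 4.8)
The plan is to build the universal vector $y$ by a greedy/telescoping construction along a rapidly increasing sequence of "blocks", exactly as in the MacLane/Rolewicz-type arguments, but keeping track of two competing requirements at once: that $y$ be universal for the weighted iterates $2^n n!^2 B_\omega^n$, and that the auxiliary vector $x$ with $x_{i+1}=2^{a_i} i!^2 i^2 \prod_{j\le i} y_j^{-1}$ lie in $\ell_1$. Fix a dense sequence $(p_k)_k$ of finitely supported vectors in $\ell_1$ with strictly positive coordinates in their support, say $p_k\in\mathrm{span}\{e_1,\dots,e_k\}$ with $\frac1k\le |[p_k]_j|\le k$ for $j\le k$, all other coordinates zero. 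I would define $y$ blockwise: choose $0=k_1<k_2<\cdots$ with $k_{n+1}-k_n\to\infty$ fast, and on the $n$-th block put (a rescaled copy of) $p_n$ so that $(2^{k_n}k_n!^2 \cdots) B_\omega^{k_n} y$ reproduces $p_n$ up to an error that the remaining blocks can only perturb by a summable, vanishing amount. Concretely, on the block of indices $\{k_n+1,\dots,k_n+n\}$ one sets $[y]_{k_n+j}$ to be $[p_n]_j$ divided by the accumulated weight factor $2^{k_n}(k_n)!^2\,\omega_{k_n+1}\cdots\omega_{k_n+j}$; since the shift is backward this places the corrected coordinates so that $B_\omega^{k_n}$ lands them in positions $1,\dots,n$ with the prescribed values. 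The seminorm estimate $\| 2^{k_n}k_n!^2 B_\omega^{k_n}y - p_n\|_1 \to 0$ then reduces to showing that the tail blocks $m>n$ contribute a geometrically small amount, which is arranged by taking $k_{m}$ large enough that the enormous factor $2^{k_m}(k_m)!^2$ is still dwarfed by $\omega_1\cdots\omega_{k_m}=(k_m!)^{-2}$ times the smallness gained from $k_m!$ in the denominators — i.e. one uses that $\frac{2^{k}(k!)^2}{(k!)^2}=2^k$ is beaten by choosing later blocks with a further $1/j!$ decay built into the coordinates, exactly the mechanism in Theorem~\ref{Hiper en H(C)}.

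The second, and I expect principal, obstacle is the simultaneous control of $x\in\ell_1$. Here the point is the identity $x_{i+1}=2^{a_i} i!^2 i^2 \prod_{j\le i} y_j^{-1}$ with $a_i = 1-\tfrac{i(i-1)}{2}$, so $2^{a_i}=2\cdot 2^{-i(i-1)/2}$ decays super-geometrically; the factors $i!^2 i^2$ grow only factorially, and the real danger is that $\prod_{j\le i} y_j^{-1}$ could blow up if some $y_j$ is tiny. This is precisely why the lower bound $|[p_n]_j|\ge 1/n$ was imposed: on block $n$ the coordinate $y_{k_n+j}$ equals $[p_n]_j$ divided by $2^{k_n}(k_n)!^2\omega_{k_n+1}\cdots\omega_{k_n+j}$, so $|y_{k_n+j}|\ge \frac{1}{n}\cdot$(something controlled), hence $|y_{k_n+j}^{-1}|$ is at most polynomial-times-$2^{k_n}(k_n!)^2\cdot(k_n+j)!^2/(k_n!)^2$-ish in $i$, which when multiplied against the $2^{-i(i-1)/2}$ from $a_i$ still yields $|x_{i+1}|\le C\rho^{i^2}$ for some $\rho<1$ once the $k_n$ are spaced far enough apart. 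So the construction is a single induction on $n$: given $y$ on blocks $1,\dots,n-1$, pick $k_n$ so large that (i) the universality error from block $n$ onward is $<2^{-n}$ in each of the first $n$ seminorms, and (ii) $\sum_{i \text{ in block }n}|x_{i+1}|<2^{-n}$. Both (i) and (ii) hold for all sufficiently large $k_n$ because the bad factors are at most factorial while the good factors — the gap between blocks and the $2^{-i(i-1)/2}$ weight — are super-exponential; this is the crux, and it is exactly the trade-off flagged in the paragraph before the theorem ("the weights can not tend to zero too fast").

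Finally I would assemble the pieces: the vector $y$ so constructed is well defined in $\ell_1$ (its coordinates are summable by the block estimates), it is universal for $(2^n n!^2 B_\omega^n)_n$ by the seminorm bound above, and the associated $x$ is well defined in $\ell_1$ by (ii). I would remark that the only inputs are elementary inequalities comparing factorial growth against the chosen lacunary gaps, so no appeal to Runge or Baire is needed here — universality is produced by hand — which is why the previous subsection could cite this theorem for the simpler unweighted Lemma~\ref{construccion 1} as a special case ($\omega\equiv 1$, $a_i$ adjusted so that $c_{2n}=2^n$).
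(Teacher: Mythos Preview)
There is a genuine gap in your proposal. Your construction places the rescaled copies of the $p_n$ on the blocks $\{k_n+1,\dots,k_n+n\}$ and then takes the spacings $k_{n+1}-k_n$ large; but you never assign values to $y_j$ for $j$ in the intervals $(k_n+n,\,k_{n+1}]$ between consecutive blocks. With the standard ``sum of shifted pieces'' recipe those coordinates are zero, and then $\prod_{j\le i} y_j^{-1}$ is simply undefined as soon as $i$ crosses the first gap --- this is not ``$y_j$ tiny'', it is $y_j=0$. Your lower bound $|[p_n]_j|\ge 1/n$ controls the coordinates \emph{inside} the blocks but says nothing about the gaps, so your step (ii) cannot be carried out as written.

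The paper's proof confronts exactly this point: the naive universal vector $\tilde z=\sum_k S_\omega^{n_k}\frac{z_k}{2^{n_k}n_k!^2}$ has gaps of zeros of length $n_k-k$, so one \emph{fills each gap} with a control vector $\sum_{l\in\text{gap}}\frac{1}{l^2\, 2^{n_{k-1}}}e_l$. These fillers are small enough not to spoil the $\ell_1$-norm of $y$ nor the universality estimate (they are killed by the factor $2^{n_j}n_j!^2$ only after being shifted, and the bound $\frac{2^{n_j}n_j!^2}{2^{n_l}}\cdot\frac{1}{l^2}$ is summable by the choice of $n_l$), yet large enough that their contribution to $\prod_{j\le i}y_j^{-1}$, a factor of order $2^{n_{k-1}\cdot(\text{gap length})}\cdot\prod l^2$, is bounded by $2^{i^2/4}\cdot i!^2$ via the elementary inequality $n_{k-1}(i-n_{k-1}-k+1)\le i^2/4$. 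This is then beaten by the $2^{a_i}\sim 2^{-i^2/2}$ in the definition of $x$. Choosing the filler sizes correctly and verifying the resulting product estimate (the paper's condition \ref{n_js2}) is the technical core of the argument; once you add this ingredient, your inductive scheme --- choose $k_n$ large enough to satisfy finitely many inequalities at each stage --- does work essentially as you describe.
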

\begin{proof}
 For an element $y\in \ell_1$ we consider the associated vector 
$$
[\Phi(y)]_i=2^{a_i} i^2i!^2\prod_{l\leq i} |[y]_l|^{-1}.$$
We need to construct an universal vector $y$ so that $\Phi(y)\in\ell_1$.
Let $(z_n)_n$ be a dense sequence in $c_{00}$, such that for all $n$, $n=\max \{i: [z_n]_i\neq 0\}$, and for all $i\le n$, $\f{1}{n}\leq |[z_n]_i|\leq n$.

Consider $S_\omega$ the formal right inverse of $B_\omega$. We will construct the vector proceedings as follows.
An usual universal vector for the family $\{2^{n}n!^2 B_\omega^n\}$ is of the form
$$\tilde z=\sum_k S_\omega^{n_k} \f{z_k}{2^{n_k}n_k!^2}.$$
Note however that this vector has gaps of zeros of length $n_k-k$,  thus it is impossible that $\Phi(\tilde z)\in \ell_1$. Therefore we will add a control vector in each gap, and enlarge the length of the gaps ($n_k-k$) to force $[\phi(z)]_i\leq \frac{1}{i^2}$ for all $i\leq k$. Let $n_1=0$.
Our universal vector will be of the type 
\begin{align*}
z&=z_1+\sum_{l=1+1}^{n_2} \f{1}{l^2}e_l+ S_\omega^{n_2} \frac{z_2}{2^{n_2}n_2!^2}+\sum_{l=n_2+2+1}^{n_3} \f{1}{l^22^{n_2}}e_l+S_\omega^{n_3} \f{z_3}{2^{n_3}n_3!^2}+\ldots\\
&=
\sum_{k=1}^\infty S_\omega^{n_k} \f{z_k}{2^{n_k}n_k!^2}+\sum_{k=2}^\infty \sum_{l=n_{k-1}+k-1+1}^{n_k} \f{1}{l^22^{n_{k-1}}}e_l.
\end{align*}
Notice that $z$ can be written as a limit of the vectors
$$
u_j=\sum_{k=1}^j S_\omega^{n_k} \f{z_k}{2^{n_k}n_k!^2}+
\sum_{k=2}^{j} \sum_{l=n_{k-1}+k-1+1}^{n_k} \f{1}{l^22^{n_{k-1}}}e_l.
$$
 Each $u_j$ extends $u_{j-1}$ and $[\Phi(u_j)]_i$ reads only the first coordinates. Hence we may construct the vectors $u_j$ inductively so that they  satisfy  $[\Phi(u_j)]_i\leq \frac{1}{i^2}$ for all $n_2<i<n_j+j+1$. This condition will pass automatically to $z$.

We now construct the numbers $n_k$ (i.e. the gaps), and hence $z$, by induction. Take $n_1=0$ and $u_1=z_1$. 
Let $n_2$ be such that, for $n\ge n_2$,

\begin{equation}
\label{n_2}
2^{\frac{n^2}{4}}n^4\cdot n!^4\cdot 2^{2n}\cdot 2^2\cdot2^{a_n}\leq 1
\end{equation}  \
This may be done, because $a_n\sim -\frac{n^2}{2}.$ We define $u_2$ as 
$$
u_2:=u_1+\sum_{l=1+1}^{n_2} \f{1}{l^2} e_l+ \f{S_\omega^{n_2}{z_2}}{2^{n_2}n_2!^2}.
$$

We claim that 
 $[\Phi(u_2)]_i\leq \f{1}{i^2}$ for $i=n_2+1,i=n_2+2$ but also that $[\Phi(\tilde u_2)]_i\leq \f{1}{i^2}$ for $i\geq n_2+1$, where $\tilde u_2=u_2+\sum_{l=n_2+2+1}^\infty\frac{1}{l^22^{n_2}}e_l$. Having a bound for  $[\Phi(\tilde u_2)]_i$ will help us to estimate $[\phi(u_3)]_i$ in the next step.
We need first to bound $\prod_{i=1}^l |[u_2]_i|^{-1}$ for $n_2+1\leq l\leq n_2+2$.

Recall that $|[z_2]_i|\geq \frac{1}{2}$ for $i\leq 2$, and that $supp(z_2)=[1,2]$. This gives us that $\left|[S_\omega^{n_2}(\frac{z_2}{2^{n_2}n_2!^2\\
})]_l\right|^{-1}\leq 2^{n_2}\cdot 2$ for $l=n_2+1$ and $l=n_2+2$.
By a direct estimation we get,
\begin{align}\label{x_2}
\nonumber \prod_{i=1}^l |[u_2]|^{-1}&=\prod_{i=1}^1 |[u_2]_i|^{-1}\cdot \prod_{i=2}^{n_2} |[u_2]_i|^{-1}\cdot
\prod_{i=n_2+1}^l |[u_2]_i|^{-1}\\
&\leq 1\cdot n_2!^2\cdot 2^{2n_2}\cdot 2^{2}.
\end{align}
Therefore, by \eqref{n_2} we get for $n_2+1\leq l\leq n_2+2$ that
\begin{equation*}
[\Phi(u_2)]_l\leq 2^{a_l} l!^2 l^2 \prod_{i=1}^l |[u_2]_i|^{-1}\leq l!^4\cdot l^2 \cdot 2^{2n_2}\cdot 2^2\cdot 2^{a_{l}}\leq \f{1}{l^2}.
\end{equation*}   
 
With this choice of $n_2$ we also get  that
$[\Phi(\tilde u_2)]_i\leq \frac{1}{i^2}$ for all $i\geq n_2+1$. To see this we need first get a  bound for $\prod_{l=1}^i
|[\tilde u_2]_l|^{-1}$, for $i\geq n_2+3$. Using inequality \eqref{x_2} we obtain,
 \begin{align*}\label{phidif}
 \nonumber \prod_{l=1}^i
 |[\tilde u_2]_l|^{-1}&\leq n_2!^2\cdot 2^{2n_2}\cdot 2^{2}\cdot\prod_{l=n_2+2+1}^i {l^22^{n_2}}\\
  &\leq i!^2\cdot 2^{2n_2}\cdot 2^{2}\cdot 2^{n_2(i-2-n_2)}\\
  & \leq i!^2\cdot 2^{2n_2}\cdot 2^{2}\cdot 2^{\frac{i^2}{4}}\cdot
  \leq i!^2\cdot 2^{2i}\cdot 2^{2}\cdot 2^{\frac{i^2}{4}}, 
 \end{align*}
 because
 $n_2(i-2-n_2)\leq \f{i^2}{4}$ for all $i\geq n_2$.
 Consequently, by the choice of $n_2$ and because $i\geq n_2$, 
 \begin{equation*}
 \Phi[(\tilde u_2)]_i \leq 2^{a_i} i!^2\cdot i^2\cdot  i!^2\cdot 2^{2i}\cdot 2^{2}\cdot 2^{\frac{i^2}{4}} \leq \frac{1}{(i+2)^2}\leq \frac{1}{i^2}.
 \end{equation*}

We will define inductively numbers $(n_j)_j$ (or the gaps $[n_{j-1}+j-1+1,n_j]$) and vectors $(u_j)_j$ and $(\tilde u_j)_j$ as we did in the first step. That is, $u_j$ extends $u_{j-1}$ and $\tilde u_j$ extends $u_j$. We want also $\tilde u_j$ to satisfy $[\Phi(\tilde u_j)]_i\leq \frac{1}{i^2}$ for all $n_2<i$. 

 Thus we define for all  $j\ge 2$,
 
\begin{enumerate}[label=(\roman*)]
\item  $$u_j=u_{j-1}+\sum_{l=n_{j-1}+j-1+1}^{n_j} \f{1}{2^{n_{j-1}}l^2}e_l+ S^{n_j}_\omega \l\f{z_{j}}{2^{n_j}n_j!^2}\r;$$
\item $$\tilde u_j=u_j+\sum_{l=n_j+j+1}^\infty \f{1}{2^{n_j}l^2} e_l;$$
\item \label{hipers2}  $\displaystyle \f{2^{n_{j-1}}n_{j-1}!^2 (n_j+j)^{2j}j^2}{2^{n_j}}<\frac{1}{j^2}$;
\item \label{n_js2}  for all $n\geq n_j$
$$(n+j)^4\cdot2^{\frac{n^2}{4}}\cdot n!^4\cdot2^{a_n}\cdot \pi_{j-1}\cdot 2^{n_{j-1}n}\cdot 2^{jn} \cdot j^j\leq 1,$$
where  $$\pi_{j-1}:=\prod_{l=1}^{n_{j-1}+j-1} |[u_{j-1}]_l|^{-1}.$$  
\end{enumerate}

The sequences $(n_j)_j$, $(u_j)_j$ and $(\tilde u_j)_j$ are well defined because conditions (i), (ii) and (iii) are automatically fulfilled  taking $n_j$  large enough.


%

We claim that each $u_j$ satisfies that for $n_2<i\leq n_j+j$,
\begin{equation}\label{phis2}
|[\Phi(u_j)]_i|\leq \frac{1}{i^2}.
\end{equation}
Since each $u_j$ extends $u_{j-1}$ and for any $v$, $[\phi(v)]_i$ depends only on the first $i$-coordinates of $v$, it is enough to prove that $[\phi(u_j)]_i\leq \frac{1}{i^2}$ for $n_{j-1}+j-1< i\leq n_j+j$. If $n_j+1\leq i\leq n_j+j$ it follows, by a direct estimation, that  
\begin{equation*}
 \prod_{l=1}^i
  |[u_j]_l|^{-1}\leq \pi_{j-1}\cdot\left(\prod_{l=n_{j-1}+j}^{n_j} 2^{n_{j-1}}l^2\right)\cdot \left(\prod_{l=n_{j}+1}^{i} 2^{n_{j}}n_j!^2 \left|[S_\omega^{n_j}(z_j)]_l\right|^{-1}\right)     \leq \pi_{j-1}\cdot n_j!^2\cdot 2^{n_{j-1}n_j}\cdot 2^{jn_j} \cdot j^j.
 \end{equation*} 
Applying \ref{n_js2} it follows that $[\Phi(u_j)]_i\leq \f{1}{i^2}$.\\

  If $n_{j-1}+j-1< i\leq n_j$, we have that $n_{j-1}(i-{n_{j-1}-j+1})\leq \frac{i^2}{4}$ and hence
\begin{align*}
\prod_{l=1}^i [u_j]_l^{-1}&\leq \pi_{j-1}\cdot \frac{i!^2}{(n_{j-1}+j-1)!^2} 2^{n_{j-1}(i-{n_{j-1}-j+1})}\\
&\leq \pi_{j-2}\cdot\left(\prod_{l=n_{j-2}+j-1}^{n_{j-1}} 2^{n_{j-2}}l^2\right)\cdot \left(\prod_{l=n_{j-1}+1}^{n_{j-1}+j-1} 2^{n_{j-1}}n_{j-1}!^2 \frac{l!^2(j-1)}{(l+n_{j-1})!^2} \right)\cdot \frac{i!^2}{(n_{j-1}+j-1)!^2} 2^{\f{i^2}{4}}\\
& \le  \pi_{j-2}\cdot n_{j-1}!^2\cdot 2^{n_{j-2}n_{j-1}}\cdot 2^{(j-1)n_{j-1}} \cdot (j-1)^{j-1}\cdot \frac{i!^2}{(n_{j-1}+j-1)!^2} 2^{\f{i^2}{4}}.
\end{align*} 
Therefore, by \ref{n_js2} applied to $n_{j-1}$,
\begin{align*}
[\Phi(u_j)]_i=  2^{a_i}i!^2i^2 \prod_{l=1}^i [u_j]_l^{-1}\leq 2^{a_i}i!^2i^2  \pi_{j-2}\cdot  2^{n_{j-2}n_{j-1}}\cdot 2^{(j-1)n_{j-1}} \cdot (j-1)^{j-1}\cdot  2^{\f{i^2}{4}} \le \frac{1}{i^2}.
\end{align*}
The universal vector we are looking for is
$z=\lim_{j\to \infty} u_j$. To see that the vector is well defined we  use inequality \ref{hipers2}. For $n_j+1 \leq l\leq j+n_j$ we have that,
$$
|[z]_l|=\left|\left[S_\omega^{n_j} \f{z_j}{2^{n_j}n_j!^2}\right]_l\right|\leq \left|\f{j(n_j+j)!^2}{2^{n_j}n_j!^2}\right|\leq \frac{j(n_j+j)^{2j}}{2^{n_j}}.
$$
Hence,
\begin{align}\label{norma}
 \left\|S_\omega^{n_j} \f{z_j}{2^{n_j}n_j!^2}\right\|_1&\leq
\frac{j^2(n_j+j)^{2j}}{2^{n_j}}\\
\nonumber&\leq\f{1}{j^2}.
\end{align} 
 Thus we obtain,
\begin{align*}
    \|z\|_1=\left\|\sum_{j=1}^\infty S_\omega^{n_j} \f{z_j}{2^{n_j}n_j!^2} \right\|_1 + \left\|\sum_{j=1}^\infty \sum_{k=n_{j}+j+1}^{n_{j+1}} \f{e_k}{k^2 2^{n_{j}}}\right\|_1\le 2\left\|(\frac1{j^2})_j\right\|_1<\infty.
\end{align*}
It remains to see that $\Phi(z)$ is well defined and that $z$ is universal. The well definition of $\Phi(z)$ is deduced from \eqref{phis2}. Indeed, $z$ extends $u_j$ for each $j$ and if $n_j>i$ we have $|[\Phi(z)]_i|=|[\Phi(u_j)]_i|\leq \frac{1}{i^2}.$

Finally the vector results universal by inequalities \ref{hipers2} and \eqref{norma}. It suffices to show that $2^{n_j}n_j!^2 B_\omega^{n_j} (z)-z_j\to 0$, 
\begin{align*}
    \|2^{n_j}n_j!^2B_\omega^{n_j} (z)-z_j\|&=\left\|2^{n_j}n_j!^2B_\omega^{n_j}\l \sum_{l=1}^\infty S_\omega^{n_l} \f{z_l}{2^{n_l}n_l!^2}+ \sum_{l=1}^\infty \sum_{k=n_{l}+l+1}^{n_{l+1}} \f{e_k}{k^2 2^{n_{l}}}\r-z_j\right\|\\
    &\leq \left \|\sum_{l={j+1}}^\infty S_\omega^{n_l-n_j} \f{z_l2^{n_j}n_j!^2}{2^{n_l}n_l!^2}\right\|+ \left\|
    \sum_{l=1}^\infty \sum_{k=n_l+l+1}^{n_{l+1}} \f{2^{n_j}n_j!^2B_\omega^{n_j}e_k}{k^2 2^{n_{l}}}\right\|\\
    &\leq \sum_{l={j+1}}^\infty \f{2^{n_{l-1}}n_{l-1}!^2}{2^{n_l}n_l!^2}\| S_\omega^{n_l-n_j} z_l\|+ \left\|
        \sum_{l=j}^\infty \sum_{k=n_l+l+1}^{n_{l+1}} \f{2^{n_j}n_j!^2}{k^2 2^{n_{l}}} \f{k!^2}{(n_j+k)!^2}e_{k-n_j}\right\|\\
     &\leq \sum_{l={j+1}}^\infty \f{2^{n_{l-1}}n_{l-1}!^2}{2^{n_l}n_l!^2}\| S_\omega^{n_l} z_l\|+ \sum_{l=n_j+j+1}^\infty \f{1}{l^2}\\
    &\leq \sum_{l={j+1}}^\infty \f{2^{n_{l-1}}n_{l-1}!^2  (n_l+l)^{2l}l^2}{2^{
    n_l}} + \sum_{l=n_j+j+1}^\infty \f{1}{l^2}\\
    &\leq 2 \sum_{l\geq j}\f{1}{l^2}\to 0.
\end{align*}
\end{proof}

For the proof of Theorem \ref{main Theorem} we will also need the following property of Fibonacci numbers.
\begin{lemma}\label{Lema a_n 1}
	Let $a_n$ be recurrently defined as $$a_n:=\begin{cases}
	a_1=1\\
	a_n=n-\sum_{j=1}^{n-1} a_{n-j} F_{2j+1},
	\end{cases}$$
	where $(F_j)_j$ is the usual Fibonacci sequence. Then $$a_n=1-\frac{n(n-1)}{2}=-\sum_{j=2}^{n-1}j.$$
\end{lemma}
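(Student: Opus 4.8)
The plan is to prove the closed form $a_n = 1-\frac{n(n-1)}{2}$ by induction on $n$, exploiting the identity $F_{2n}=\sum_{j=1}^n F_{2j-1}$ quoted earlier in the excerpt as \eqref{eqfib1}. The base case $n=1$ is immediate since $a_1=1=1-\frac{1\cdot 0}{2}$. For the inductive step, I would assume $a_k = 1-\frac{k(k-1)}{2}$ for all $k<n$ and substitute into the recurrence $a_n = n - \sum_{j=1}^{n-1} a_{n-j}F_{2j+1}$.

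The key computation is to evaluate $\sum_{j=1}^{n-1} a_{n-j}F_{2j+1}$. Writing $a_{n-j} = 1 - \frac{(n-j)(n-j-1)}{2}$, I split the sum into $\sum_{j=1}^{n-1} F_{2j+1}$ minus $\frac12\sum_{j=1}^{n-1}(n-j)(n-j-1)F_{2j+1}$. For the first sum, reindexing $\sum_{j=1}^{n-1}F_{2j+1} = \sum_{i=2}^{n}F_{2i-1} = F_{2n} - F_1 = F_{2n}-1$ by \eqref{eqfib1}. The second sum is the genuine obstacle: one needs a closed form (or at least a usable telescoping form) for $\sum_{j=1}^{n-1}(n-j)(n-j-1)F_{2j+1}$. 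Here I would substitute $i=n-j$ to rewrite it as $\sum_{i=1}^{n-1} i(i-1)F_{2(n-i)+1}$ and look for a telescoping identity, or alternatively set up a \emph{second induction}: define $S_n := \sum_{j=1}^{n-1} a_{n-j}F_{2j+1}$ and show directly that $S_n - S_{n-1}$ has the right value. Since $a_n = n - S_n$ and we want $a_n - a_{n-1} = -(n-1)$, i.e. $1 - (S_n - S_{n-1}) = -(n-1)$, this reduces to proving $S_n - S_{n-1} = n$. Expanding, $S_n - S_{n-1} = \sum_{j=1}^{n-1}a_{n-j}F_{2j+1} - \sum_{j=1}^{n-2}a_{n-1-j}F_{2j+1}$; reindexing the second sum and using $a_{n-j}-a_{n-1-j} = -(n-1-j)$ (valid by the inductive hypothesis for indices below $n$) together with the isolated term $a_1 F_{2(n-1)+1}=F_{2n-1}$, one is left with $F_{2n-1} - \sum_{j=1}^{n-2}(n-1-j)F_{2j+1}$, which must equal $n$. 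This last identity, $F_{2n-1} - \sum_{j=1}^{n-2}(n-1-j)F_{2j+1} = n$, is again proved by induction on $n$ using the telescoping $\sum_{j=1}^{n-2}(n-1-j)F_{2j+1} - \sum_{j=1}^{n-3}(n-2-j)F_{2j+1} = \sum_{j=1}^{n-3}F_{2j+1} + F_{2n-3} = (F_{2n-2}-1)$ by \eqref{eqfib1}, and then $F_{2n-1} = F_{2n-2}+F_{2n-3}$ closes the loop.

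An alternative, possibly cleaner route: rather than nesting inductions, define $b_n := 1 - \frac{n(n-1)}{2}$ and verify directly that the sequence $(b_n)$ satisfies the defining recurrence $b_n = n - \sum_{j=1}^{n-1}b_{n-j}F_{2j+1}$; since the recurrence determines the sequence uniquely from $a_1=1=b_1$, this suffices. This reduces everything to the single identity
\begin{equation*}
\sum_{j=1}^{n-1}\left(1 - \tfrac{(n-j)(n-j-1)}{2}\right)F_{2j+1} = n - \left(1 - \tfrac{n(n-1)}{2}\right) = \tfrac{n(n-1)}{2} + n - 1,
\end{equation*}
which after using $\sum_{j=1}^{n-1}F_{2j+1}=F_{2n}-1$ becomes the pure Fibonacci identity $\sum_{j=1}^{n-1}(n-j)(n-j-1)F_{2j+1} = 2\bigl(F_{2n} - 1 - \tfrac{n(n-1)}{2} - n + 1\bigr) = 2F_{2n} - n^2 - n$. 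I expect the main obstacle to be establishing this weighted Fibonacci sum identity; the right tool is summation by parts (Abel summation) applied twice, turning the quadratic weight $(n-j)(n-j-1)$ into iterated partial sums of $F_{2j+1}$, each of which telescopes via \eqref{eqfib1} to values of the form $F_{2m}-1$, together with the standard identity $\sum_{m=1}^{N}(F_{2m}-1) = F_{2N+1} - 1 - N$. Assembling these gives $2F_{2n}-n^2-n$ after elementary algebra, completing the proof.
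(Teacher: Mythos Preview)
Your argument is correct: the second route you sketch (showing $S_n-S_{n-1}=n$ via the difference $a_{n-j}-a_{n-1-j}=-(n-1-j)$ and then the inner induction $T_n-T_{n-1}=1$ using $\sum_{j=1}^{n-3}F_{2j+1}=F_{2n-4}-1$ and $F_{2n-1}=F_{2n-2}+F_{2n-3}$) goes through cleanly, and the Abel-summation alternative would work as well.

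The paper, however, proceeds differently and more structurally. Instead of substituting the conjectured closed form into the recurrence, it introduces the auxiliary sum with \emph{even} Fibonacci weights, $b_n=\sum_{j=1}^{n-1}a_{n-j}F_{2j}$, and splits $F_{2j+1}=F_{2j}+F_{2j-1}$ inside the defining recurrence. The $F_{2j}$ part gives $b_n$; the $F_{2j-1}$ part, after a shift, reproduces the recurrence for $a_{n-1}$ and collapses to $n-1$. This yields $a_n=1-b_n$ in one line, and the same splitting applied to $b_{n+1}$ gives the first-order recursion $b_{n+1}=n+b_n$, from which $b_n=\tfrac{n(n-1)}{2}$ is immediate. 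The trade-off: your approach is the natural ``plug in and verify'' induction and needs a nested induction plus the identity \eqref{eqfib1} twice; the paper's approach never inserts the closed form at all, uses only the Fibonacci recurrence, and avoids any weighted Fibonacci sums, at the cost of the slightly unexpected choice of auxiliary sequence $b_n$.
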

\begin{proof}
	We work with the auxiliary sequence $b_n=\sum_{j=1}^{n-1} a_{n-j}F_{2j}$. We claim that for $n\geq 3$, $1-b_n=-\sum_{j=2}^{n-1}j$.
	When $n=3$, we have
	\begin{align*}
	b_3=\sum_{j=1}^{3-1} a_{3-j}F_{2j}&=a_1F_4+a_2F_2=1\cdot3+0\cdot1=3.
	\end{align*}
	Therefore $1-b_3=-2=-\sum_{j=2}^{3-1}j.$
	Suppose now that $n>3$, then
	\begin{align*}
	b_{n+1}&=\sum_{j=1}^{n} a_{n+1-j}F_{2j}=a_{n}+\sum_{j=2}^{n} a_{n+1-j}F_{2j-2}+\sum_{j=2}^{n}a_{n+1-j}F_{2j-1}\\
	&=a_n+b_{n-1}+(n-a_n)=n+b_{n-1}.
	\end{align*}
	Consequently, proceeding inductively, we get,
	\begin{align*}
	1-b_n&=1-n-b_{n-1}=-\sum_{j=2}^{n}j.
	\end{align*}
	Finally we obtain,
	\begin{align*}
	a_n&=n-\sum_{j=1}^{n-1} a_{n-j} F_{2j+1}=n-\sum_{j=1}^{n-1} a_{n-j} F_{2j}-\sum_{j=1}^{n-1} a_{n-j} F_{2j-1}\\
	&=n-b_n-(n-1-a_{n-1}+a_{n-1})=1-b_n.
	\end{align*}
\end{proof}

\begin{proof}[Proof of Theorem \ref{En Banach}.]
We compute the orbit generated by a pair of vectors $x,y$,
\begin{equation}\label{formula}
M^{ n}(x,y)=\begin{cases}
                  B_\omega^\frac{n+1}{2}(x) c_{n}(x,y)d_n(\omega)& \text{ if } n \text{ is odd;}\\
                  B_\omega^\frac{n}{2}(y) c_{n}(x,y)d_n(\omega)& \text{ if } n \text{ is even.}
                  \end{cases}
\end{equation}
Where $c_n(x,y)$ is defined as 
$$c_n(x,y)=\begin{cases}
            y_1^{F_n}x_2^{F_{n-1}}y_2^{F_{n-2}}x_3^{F_{n-3}}\ldots y_\f{n+1}{2}^{F_1} & \text{ if } n \text{ is odd};\\
             y_1^{F_n}x_2^{F_{n-1}}y_2^{F_{n-2}}x_3^{F_{n-3}}\ldots x_{\f{n}{2}+1}^{F_1} & \text{ if } n \text{ is even}.\\
           \end{cases} $$
           and 
$$d_n(\omega)= \begin{cases}
\omega_1^{F_{n+1}-1}\cdot\omega_2^{F_{n-1}-1}\cdot\omega_3^{F_{n-3}-1}\ldots 
\omega_\frac{n-1}{2}^{F_{4}-1}& \text{ if } n \text{ is odd};\\
\omega_1^{F_{n+1}-1}\cdot\omega_2^{F_{n-1}-1}\cdot\omega_3^{F_{n-3}-1}\ldots 
\omega_\frac{n}{2}^{F_{3}-1}& \text{ if } n \text{ is even}.
\end{cases}$$
The sequence $c_n(x,y)$ may be seen as $c_n(z)$, where $z$ is the merge between $x$ and $y$, $z=(y_1,x_2,y_2,x_3,\ldots)$ and $c_n(z)=z_1^{F_n}\ldots z_n^{F_1}$. As we saw in Section \ref{SecRes}, we have that $c_n(z)=c_{n-1}(z)c_{n-2}(z)z_n$.  
Similarly the sequence $d_n(\omega)$ satisfies the following recursive relation, for $n\ge 3$,
$$
d_n(\omega)=\begin{cases}
d_{n-1}(\omega)d_{n-2}(\omega)\omega_1\ldots\omega_\frac{n-1}{2}&  \text{ if } n \text{ is odd}\\
d_{n-1}(\omega)d_{n-2}(\omega)\omega_1\ldots\omega_\frac{n}{2} & \text{ if } n \text{ is even}.
\end{cases}
$$
Indeed, suppose now  $n\ge 3$  is even, then $n-1$ and $n+1$ are odd. By using the above formulas we get
\begin{align*}
d_n(\omega)d_{n-1}(\omega)\omega_1\ldots\omega_\frac{n+1-1}{2}&=\omega_1^{F_{n+1}-1}\omega_2^{F_{n-1}-1}\ldots\omega_\frac{n}{2}\cdot \omega_1^{F_{n-1+1}-1}\omega_2^{F_{n-1-1}-1}\ldots\omega_\frac{n-2}{2}^2\cdot\omega_1\ldots\omega_\frac{n}{2}\\
&=\omega_1^{F_{n+1}-1+F_{n}-1+1}\omega_2^{F_{n-1}-1+F_{n-2}-1}\ldots \omega_{\frac{n-2}{2}}^{F_5-1+F_4-1+1}
\omega_\frac{n}{2}^2\\
&=\omega_1^{F_{n+2}-1}\omega_2^{F_{n}-1}\ldots \omega_{\frac{n-2}{2}}^{F_6-1}
\omega_\frac{n}{2}^{F_4-1}.
\end{align*}
If $n$ is odd, it is analogous.

 Now we prove equality \eqref{formula} again by induction. 
 If $n=1$ we have that $c_1(x,y)=y_1^{F_1}=y_1$ and $d_1(\omega)=\omega_1^{F_2-1}=1$, while $M(x,y)=B_\omega (x)y_1$.
Suppose now that $n>1$ is even, then
\begin{align*}
M^{n}(x,y)&=M\l M^{ n-2}(x,y),M^{ n-1}(x,y)\r\\
&=M\l B_\omega^\frac{n-2}{2}(y)c_{n-2}(x,y)d_{n-2}(\omega),
B_\omega^\frac{n}{2}(x)c_{n-1}(x,y)d_{n-1}(\omega)\r\\ 
&=B_\omega\l B_\omega^\frac{n-2}{2}(y)c_{n-2}(x,y)d_{n-2}(\omega)\r\cdot \left[B_\omega^\frac{n}{2}(x)c_{n-1}(x,y)d_{n-1}(\omega)\right]_1\\
&=B_\omega^\frac{n}{2}(y)\cdot c_{n-2}(x,y)\cdot d_{n-2}(\omega)\cdot c_{n-1}(x,y)\cdot d_{n-1}(\omega)\cdot x_{\frac{n}{2}+1}\cdot\omega_1\ldots \cdot \omega_{\frac{n}{2}}\\
&=B_\omega^\frac{n}{2}(y) c_n(x,y)d_n(\omega).
\end{align*}
Again, the case when $n$ is odd is analogous.

Next, we will construct a pair of vectors $(x,y)$ such that the even iterations $(M^{2n}(x,y))_n$ are dense. The  sequence   $(c_{2n}(x,y)d_{2n}(\omega)B_\omega^n(y))_n$ can be seen as the product of a universal family with certain weights. So if we manage to control the weights so that  $y$ is an universal vector for this family, the orbit will be dense.

Let $y\in\ell_1$ be fixed, we want to find $x$ (depending on $y$) such that $c_{2n}(x,y)d_{2n}(\omega)B_\omega^n(\cdot)$ results universal. Observe that $B_\omega^n$ has norm $\frac{1}{n!^2}$, this implies that, in order to get an universal family, the searched weights must be of order higher than $n!^2$. Also $c_n$ is multiplicative, where the multiplication in $\mathbb C^\zN$ is coordinate-wise, that is $c_n(z\cdot w)=c_n(z)\cdot c_n(w)$. 


If we want to choose $x$ to cancel the weights induced by $\omega$ and $y$, one suitable vector $\tilde x$ is  
$$[\tilde x]_{i+1}=\omega_i^{-1} \prod_{l\leq i}y_l^{-1}\omega_l^{-1}.$$
In this particular case we get $c_{2n}(x,y)d_{2n}(\omega)=n!^2$. However, the family $\{n!^2B_\omega^n\}$ fails to be universal and there is no hope that $\tilde x$ is well defined. Therefore, we multiply pointwise $\tilde x$ by another sequence, $2^{a_n}$, to get $c_{2n}(\tilde x\cdot 2^{a_n},y)=2^{n}n!^2$. It turns out that $a_n$ is a polynomial of degree two with principal coefficient negative. This will allow us to construct $y$ such that $y$ is universal for $2^{n}n!^2B_\omega^n$ and that  $x$ is well defined. 
 
Since $c_n$ is multiplicative, $a_n$ must satisfy $c_n(2^{a_n},1)=2^{n}$. The sequence $(a_n)_n$ we need is the one defined in Lemma \ref{Lema a_n 1},  
$$a_n:=\begin{cases}
   a_1=1\\
   a_n=n-\sum_{j=1}^{n-1} a_{n-j} F_{2j+1}.
  \end{cases}.$$
We claim that, for a fixed vector $y$ the vector $x$ defined as 
\begin{equation}\label{x}
[x]_{i+1}=
2^{a_i}\omega_{i}^{-1}\prod_{j\leq i} y_j^{-1}\omega_{j}^{-1}
\end{equation} 
satisfies that 
$c_{2n}(x,y)d_{2n}(\omega)=2^{n}n!^2$. 
To show this equality we will use the following well known identity. 
\begin{equation}\label{eqfib2}
 F_{2n}=\sum_{j=1}^{n} F_{2j-1}.
\end{equation}
Next we prove our claim,  
\begin{align*}
c_{2n}(x,y)d_{2n}(\omega)&=y_1^{F_{2n}}\cdot x_2^{F_{2n-1}} \cdot y_2^{F_{2n-2}}\cdot\ldots \cdot x_{n}^{F_3}\cdot y_n\cdot x_{n+1}\cdot \omega_1^{F_{2n+1}-1}\cdot\omega_2^{F_{2n-1}-1}\cdot\omega_3^{F_{2n-3}-1}\ldots 
\omega_n\\
&=(\omega_1y_1)^{F_{2n}}\cdot(\omega_1 x_2)^{F_{2n-1}} \cdot (\omega_2y_2)^{F_{2n-2}}\cdot (\omega_2x_3)^{F_{2n-3}} \ldots \cdot (\omega_{n-1}x_{n})^{F_3}\cdot (\omega_n y_n)\cdot(\omega_n x_{n+1})\cdot \omega_1^{-1}\ldots \omega_{n}^{-1}\\ 
&=(\omega_1y_1)^{F_{2n}}\cdot(\omega_1 y_1^{-1}\omega_1^{-2})^{F_{2n-1}}2^{a_1F_{2n-1}}\cdot (\omega_2y_2)^{F_{2n-2}}\cdot \\
&\quad\cdot(\omega_2y_1^{-1}y_2^{-1}\omega_1^{-1}\omega_2^{-2})^{F_{2n-3}} 2^{a_2F_{2n-3}}\cdot\ldots \cdot (\omega_n y_n) \cdot (\omega_ny_1^{-1}\ldots y_n^{-1}\omega_{1}^{-1}\ldots \omega_{n-1}^{-1}\omega_{n}^{-2}\cdot 2^{a_n})\omega_1^{-1}\ldots\omega_{n}^{-1}\\
&=(\omega_1y_1)^{F_{2n}-\sum_{j=1}^{n}	 F_{2j-1}}(\omega_2y_2)^{F_{2(n-1)}-\sum_{j=1}^{n-1} F_{2j-1}}\ldots (\omega_{n-1}y_{n-1})^{F_4-F_3-F_1}\cdot y_n^{1-1}\\
&\quad\cdot 2^{a_n+\sum_{j=1}^{n-1} a_{n-j} F_{2j+1}} \omega_1^{-1}\ldots\omega_n^{-1}\\
&=(\omega_1y_1)^0\ldots (\omega_{n-1}y_{n-1})^0  2^{n}\prod_{l=1}^n \omega_l^{-1}=2^{n}n!^2.
\end{align*}

Therefore it suffices to find an universal vector $y$ for 
$2^{n}n!^2B_\omega^n$ so that its induced vector $x$ defined as in \eqref{x} is well defined. By Lemma \ref{Lema a_n 1}, 
$$
a_n=1-\frac{n(n-1)}{2},
$$ 
and by Theorem \ref{construccion 2} there exist vectors $x,y$ with the required properties.
\end{proof}

\subsubsection{Second step}
Recall that an $m$-linear operator $L\in \LL(^m X)$ is said to be quasiconjugated to  
an $m$-linear operator $N\in \LL(^mY)$ if there exists a continuous function $\phi:Y\to X$, with dense range, such that the following diagram commutes,
$$\xymatrix{
 Y^m \ar [r] ^N \ar[d]^{\phi^m}& Y \ar^\phi[d]\\
X^m \ar[r]^L& X&
},$$
where $\phi^m=\phi\times\ldots\times\phi$.
Analogously to \cite[Theorem 3]{GroKim13}, we have the following proposition.
\begin{proposition}\label{Conj}
Let $N$ be a hypercyclic multilinear operator. If an $m$-linear operator $L$ is quasiconjugated to $N$, then $N$ is also hypercyclic.
\end{proposition}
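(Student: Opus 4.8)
The plan is to unwind the definitions and transport a hypercyclic orbit from $N$ to $L$ through the intertwining map $\phi$. Let $N\in\LL(^mY)$ be hypercyclic, with hypercyclic vector $(y_{1-m},\dots,y_0)\in Y^m$, so that $Orb_N(y_{1-m},\dots,y_0)=\{y_n:n\ge 1-m\}$ is dense in $Y$, where $y_n=N(y_{n+1-m},\dots,y_n)$. Set $x_j:=\phi(y_j)$ for $1-m\le j\le 0$; I claim $(x_{1-m},\dots,x_0)$ is a hypercyclic vector for $L$.

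First I would prove by induction on $n$ that the orbit of $L$ with initial condition $(x_{1-m},\dots,x_0)$ satisfies $x_n=\phi(y_n)$ for all $n\ge 1-m$, where $x_n:=L(x_{n+1-m},\dots,x_n)$ denotes the $L$-orbit. The base case $1-m\le n\le 0$ holds by definition of the $x_j$. For the inductive step, assuming $x_{n+1-m}=\phi(y_{n+1-m}),\dots,x_{n}=\phi(y_{n})$, the commutativity of the diagram gives
\[
x_{n+1}=L(x_{n+1-m},\dots,x_n)=L(\phi(y_{n+1-m}),\dots,\phi(y_n))=\phi(N(y_{n+1-m},\dots,y_n))=\phi(y_{n+1}),
\]
which closes the induction. (Equivalently one checks $\phi(N^k(\cdot))=L^k(\phi^m(\cdot))$ using the iterated operators $N^k$, $L^k$ defined in Section \ref{SecRes}, again by an identical induction on $k$.)

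Thus $Orb_L(x_{1-m},\dots,x_0)=\{\phi(y_n):n\ge 1-m\}=\phi\bigl(Orb_N(y_{1-m},\dots,y_0)\bigr)$. Since $\phi$ is continuous, $\phi\bigl(\overline{Orb_N(y_{1-m},\dots,y_0)}\bigr)\subseteq\overline{\phi(Orb_N(y_{1-m},\dots,y_0))}$, and since $Orb_N(y_{1-m},\dots,y_0)$ is dense in $Y$ and $\phi$ has dense range, we get
\[
X=\overline{\phi(Y)}=\overline{\phi\bigl(\overline{Orb_N(y_{1-m},\dots,y_0)}\bigr)}\subseteq\overline{\overline{\phi(Orb_N(y_{1-m},\dots,y_0))}}=\overline{Orb_L(x_{1-m},\dots,x_0)},
\]
so the $L$-orbit is dense in $X$ and $L$ is hypercyclic.

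There is no serious obstacle here; the proof is a routine transport argument, exactly parallel to the linear case and to \cite[Theorem 3]{GroKim13}. The only point requiring a modicum of care is bookkeeping the multi-index recursion: one must make sure the commuting square is applied with the correct $m$ consecutive entries at each step, which is precisely what the induction above records. (Note also that the statement as phrased — "then $N$ is also hypercyclic" — should read "then $L$ is also hypercyclic," since $N$ is hypercyclic by assumption; the argument proves hypercyclicity of $L$.)
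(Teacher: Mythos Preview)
Your proof is correct and follows essentially the same approach as the paper: both show by induction that the $L$-iterates of $\phi^m(y_{1-m},\dots,y_0)$ are the $\phi$-images of the $N$-iterates, and then conclude using that $\phi$ is continuous with dense range. Your observation about the typo is also correct --- the conclusion should read that $L$ is hypercyclic, and the paper uses the proposition precisely in that sense.
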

\begin{proof}
We will prove that $Orb_L(\phi(x_1),\ldots,\phi(x_m))=\phi(Orb_N(x_1,\ldots,x_m)$ for each $m$-tuple of vectors $x_1,\ldots,x_m$. It suffices to show that for each $j$, $L^{ j} (\phi(x_1),\ldots,\phi(x_m))=\phi \l N^{ j}(x_1,\ldots,x_m)\r$. We see this equality by induction. For $j=1$ it is clear since $L(\phi(x_1),\ldots,\phi(x_m))=\phi (N(x_1,\ldots,x_m))$, because $L$ is quasiconjugated to $N$. Suppose that our claim is true for each $i<j$, and suppose first that $j>m$, then
\begin{align*}
    L^{ j}(\phi(x_1),\ldots,\phi(x_m))&=L \l L^{ (j-m)} (\phi(x_1),\ldots,\phi(x_m)),\ldots, L^{ (j-1)}(\phi(x_1),\ldots, \phi(x_m))\r\\
    &= L\l \phi(N^{ (j-m)}(x_1,\ldots,x_m),\ldots,\phi(N^{ (j-1)}(x_1,\ldots,x_m)\r\\
    &=\phi \l N\l N^{ (j-m)}(x_1,\ldots,x_m),\ldots,N^{ (j-1)}(x_1,\ldots,x_m)\r\r\\
    &= \phi \l N^{ j}(x_1,\ldots,x_m)\r.
\end{align*}
If $j\leq m$ the proof is analogous.
Finally, let $(x_1,\ldots x_m)$ be an hypercyclic tuple. Since $Orb_N(x_1,\ldots,x_m)$ is dense and $\phi$ has dense range, it follows that $(\phi(x_1),\ldots,\phi(x_m))$ is hypercyclic for $L$.
\end{proof}
\subsubsection{Final step}
To prove the existence Theorem \ref{main Theorem}, we will use the following result proved independently by Pelczy\'nski and Plichko (see, \cite[Theorem 1.27]{HaPeMon08}).

\begin{lemma}\label{Pelc}
	Let $X$ be an infinite dimensional separable Banach space. Then, for any $\varepsilon>0$ there exists  $(1+\varepsilon)$-bounded Markushevich basis. That is, there exist a sequence $(x_n,x^*_n)_n\subset X\times X^*$ such that
	\begin{enumerate}
		\item $span\{x_n:n\in\mathbb N\}$ is dense in $X$, $span\{x_n^*:n\in\mathbb N\}$ is $w^*$-dense in $X^*$
		\item $\sup_n\|x_n\|\cdot\|x_n^*\|< 1+\varepsilon$.
		\item $x_n^*(x_k)=\delta_{n,k}$.
	\end{enumerate} 
\end{lemma}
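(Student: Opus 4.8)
The plan is to establish the lemma in two stages: first manufacture \emph{some} Markushevich basis for $X$ by the classical biorthogonalization procedure, and then refine it so that it satisfies the $(1+\varepsilon)$ bound while keeping all the relevant closed spans intact.

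\textbf{Stage 1 (a Markushevich basis, without norm control).} Since $X$ is separable, choose a dense sequence $(u_k)_k$ in $X$; since then $(B_{X^*},w^{*})$ is compact metrizable, hence separable, choose a $w^{*}$-dense sequence $(v_k)_k$ in $B_{X^*}$. Build a biorthogonal system $(x_n,x_n^{*})_n$ by an induction that alternately ``captures'' the next $u_k$ and the next $v_k$. At an odd step, having a biorthogonal block $(x_i,x_i^{*})_{i\le m}$, set $w=u_k-\sum_{i\le m}x_i^{*}(u_k)x_i$; if $w\neq0$ then $w\notin\mathrm{span}\{x_i:i\le m\}$, so put $x_{m+1}=w$ and choose, by Hahn--Banach, $x_{m+1}^{*}$ vanishing on $x_1,\dots,x_m$ with $x_{m+1}^{*}(w)=1$. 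At an even step do the dual move: subtract from $v_k$ its partial expansion $\sum_{i\le m}v_k(x_i)x_i^{*}$; if the remainder $g\in X^{*}$ is nonzero, pick $x_{m+1}\in\bigcap_{i\le m}\ker x_i^{*}\setminus\ker g$ (possible since $\bigcap_{i\le m}\ker x_i^{*}$ has finite codimension and $g$ cannot vanish on it, lest $g\in\mathrm{span}\{x_i^{*}:i\le m\}$ and then $g=0$) and set $x_{m+1}^{*}=g/g(x_{m+1})$. Each step preserves biorthogonality and forces $u_k$ (resp.\ $v_k$) into $\mathrm{span}\{x_i:i\le m+1\}$ (resp.\ $\mathrm{span}\{x_i^{*}:i\le m+1\}$), and since $X$ is infinite dimensional the process never stops. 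The outcome is a biorthogonal $(x_n,x_n^{*})_n$ with $\mathrm{span}\{x_n\}$ dense (it contains all $u_k$) and $\mathrm{span}\{x_n^{*}\}$ $w^{*}$-dense (it contains all $v_k$); rescaling gives $\|x_n\|=1$, whence $\|x_n^{*}\|\ge1$, but the $\|x_n^{*}\|$ are otherwise uncontrolled.

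\textbf{Stage 2 (the $(1+\varepsilon)$ refinement).} Here lies the real work, and I would follow the Ovsepian--Pelczy\'nski scheme together with Pelczy\'nski's (and, independently, Plichko's) sharpening. Split $\mathbb N$ into consecutive finite blocks $B_1,B_2,\dots$ and, on each finite-dimensional coordinate subspace $E_k=\mathrm{span}\{x_i:i\in B_k\}$ together with its block of functionals, replace $(x_i,x_i^{*})_{i\in B_k}$ by a biorthogonal system $(y_i,y_i^{*})_{i\in B_k}$ with $\mathrm{span}\{y_i\}=E_k$ and $\mathrm{span}\{y_i^{*}\}=\mathrm{span}\{x_i^{*}:i\in B_k\}$, using the finite-dimensional rebalancing lemma: a linear automorphism of $E_k$, assembled from averages of the system against characters of a finite abelian group together with auxiliary changes of variable, that preserves biorthogonality and the block span while pulling $\max_{i\in B_k}\|y_i\|\,\|y_i^{*}\|$ below a fixed absolute constant once $|B_k|$ is large. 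Iterating this construction on the already-improved system produces a geometric decrease of the constant, so after finitely many passes one reaches $\sup_n\|y_n\|\,\|y_n^{*}\|<1+\varepsilon$. Since each block automorphism leaves $E_k$ unchanged and the blocks still exhaust a dense set, $\mathrm{span}\{y_n\}$ stays dense and $\mathrm{span}\{y_n^{*}\}$ stays $w^{*}$-dense, so $(y_n,y_n^{*})_n$ is a Markushevich basis with the required bound, which is exactly the assertion.

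\textbf{Main obstacle.} The crux is the finite-dimensional rebalancing lemma: there is no inner product against which to ``orthonormalize'', and a naive Fourier average over a block in fact \emph{increases} $\max_i\|x_i\|\,\|x_i^{*}\|$, so obtaining even a fixed constant --- let alone driving it down to $1+\varepsilon$ --- genuinely requires the Ovsepian--Pelczy\'nski mixing supplemented by Pelczy\'nski's iteration. A secondary point that must be tracked throughout is that one needs $w^{*}$-\emph{density} of $\mathrm{span}\{x_n^{*}\}$, not merely that this span separates points; this is arranged by feeding the $w^{*}$-dense sequence $(v_k)$ into Stage 1 and by observing that the Stage 2 block automorphisms do not alter the ($w^{*}$-)closed linear spans involved.
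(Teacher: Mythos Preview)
The paper does not prove this lemma at all: it is quoted as a known theorem due independently to Pelczy\'nski and Plichko, with a reference to \cite[Theorem 1.27]{HaPeMon08}. So there is no ``paper's own proof'' to compare against; your write-up is an outline of the classical argument that the paper simply imports.

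As an outline, your two-stage plan is the right one and matches the literature: Stage~1 is the standard Markushevich biorthogonalization (alternating between a dense sequence in $X$ and a $w^{*}$-dense sequence in $X^{*}$), and Stage~2 is the Ovsepian--Pe\l czy\'nski block-rebalancing, sharpened by Pe\l czy\'nski/Plichko to reach $1+\varepsilon$. You are also correct that the heart of the matter is the finite-dimensional lemma in Stage~2, and you flag honestly that you have not carried it out. That is the one genuine gap: what you wrote for Stage~2 is a description of the strategy rather than a proof --- the construction of the block automorphism that simultaneously keeps biorthogonality, preserves the block spans on both sides, and forces $\max_i \|y_i\|\,\|y_i^{*}\|$ below a fixed constant (and then below $1+\varepsilon$ after iteration) requires real work that is absent here. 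If you intend this as more than a pointer to the literature, that lemma must be stated precisely and proved; otherwise, citing \cite{HaPeMon08} as the paper does is the cleaner option.
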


\begin{theorem}\label{quasiconj en cualquier Frechet}
 Let $X$ be an infinite dimensional separable Banach  space. Let $\omega$ be $(1,\f{1}{2^2},\f{1}{3^2},\f{1}{4^2},\ldots)$ and let $M$ be the bilinear operator $e_1'(y)B_{\omega}(x)\in \mathcal L (^2\ell_1;\ell_1)$, that is $[M(x,y)]_i=y_1\cdot \omega_i x_{i+1}$. Then there exists a bilinear operator $N\in\mathcal L (^2 X;X)$ such that $N$ is quasiconjugated to $M$.
\end{theorem}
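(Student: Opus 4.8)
## Proof Strategy

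The plan is to build the quasiconjugating map $\phi:\ell_1\to X$ out of a Markushevich basis and to define $N$ on $X$ so that the conjugation diagram commutes by fiat on the basis vectors, then extend by bilinearity and continuity. First I would invoke Lemma \ref{Pelc} to fix a $(1+\varepsilon)$-bounded Markushevich basis $(x_n,x_n^*)_{n\ge 1}$ of $X$; after rescaling each pair I may assume $\|x_n\|\le 1$ and $\|x_n^*\|\le 1+\varepsilon$ (or a similar normalization), so that $span\{x_n\}$ is dense. Define $\phi:\ell_1\to X$ by $\phi(a)=\sum_{n\ge 1} a_n x_n$; since $\sum|a_n|\|x_n\|\le \|a\|_1$ this series converges absolutely, $\phi$ is linear and continuous with $\|\phi\|\le 1$, and its range contains $span\{x_n\}$, hence is dense. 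This handles the "continuous with dense range" requirement of Proposition \ref{Conj}.

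Next I would define $N\in\LL(^2X;X)$ by prescribing it on the basis and the functionals: set
\[
N(u,v)=x_1^*(v)\,\sum_{i\ge 1}\omega_i\, x_i^*(u)\, x_i,
\]
so that $N(x_{i+1},x_1)$ mimics $M(e_{i+1},e_1)=\omega_i e_i$. One checks immediately that $N$ is bilinear, and boundedness follows from $\|N(u,v)\|\le \|x_1^*\|\,\|v\|\,\sum_i \omega_i \|x_i^*\|\,\|u\|\le (1+\varepsilon)^2\big(\sum_i\omega_i\big)\|u\|\|v\|$, the sum $\sum_i \omega_i=\sum 1/i^2$ being finite — this is exactly why the weight $\omega$ was chosen summable. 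The commuting square then reduces to the identity $\phi(M(a,b))=N(\phi(a),\phi(b))$: the left side is $\phi\big(b_1(\omega_i a_{i+1})_i\big)=b_1\sum_i \omega_i a_{i+1} x_i$, while $x_1^*(\phi(b))=b_1$ and $x_i^*(\phi(a))=a_{i+1}$ wait — more carefully, $x_j^*(\phi(a))=a_j$, so the right side is $b_1\sum_i \omega_i a_{i+1} x_i$ only after shifting indices appropriately; I would simply define $N(u,v)=x_1^*(v)\sum_{i\ge 1}\omega_i\,x_{i+1}^*(u)\,x_i$ so the bookkeeping matches $[M(x,y)]_i=y_1\omega_i x_{i+1}$ exactly. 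With that definition both sides equal $b_1\sum_{i\ge1}\omega_i a_{i+1}x_i$, and the diagram commutes.

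Having established that $N$ is a well-defined bounded bilinear operator on $X$ quasiconjugated to $M$, the theorem is proved; combined with Theorem \ref{En Banach} and Proposition \ref{Conj} this yields hypercyclicity of $N$, which is the content feeding into Theorem \ref{main Theorem}. The only genuinely delicate point is the estimate for $\|N\|$: one must be sure that the weighting by $\omega_i$ together with the (uniformly bounded but not necessarily summable) quantities $\|x_i^*\|$ produces a convergent bound, and this is precisely guaranteed by $\sum_i\omega_i<\infty$ and $\sup_i\|x_i^*\|<\infty$. Everything else — linearity, continuity of $\phi$, density of its range, and the index-by-index verification of the commuting diagram — is routine. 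I would also remark that the same construction works verbatim for general $m$, defining $N(u_1,\dots,u_m)=\big(\prod_{k=1}^{m-1}x_1^*(u_k)\big)\sum_i\omega_i x_{i+1}^*(u_m)x_i$, which is why the excerpt only treats $m=2$ in detail.
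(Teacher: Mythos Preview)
Your proposal is correct and follows essentially the same approach as the paper: fix a bounded Markushevich basis via Lemma~\ref{Pelc}, define $\phi(a)=\sum_n a_n x_n$, set $N(u,v)=x_1^*(v)\sum_{i\ge 1}\omega_i\,x_{i+1}^*(u)\,x_i$, and verify the commuting relation on basis vectors. The paper's formula $N(u,v)=x_1^*(v)\sum_{l}x_l^*(u)\tfrac{1}{(l-1)^2}x_{l-1}$ is the same operator after the reindexing $l=i+1$, and your observation that the summability of $\omega$ is exactly what makes $N$ bounded is the key point in both arguments.
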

\begin{proof}
Let $(x_n)_n$ and $(x_n^*)_n$ be sequences given by the above lemma. Without loss of generality we may suppose that $\|x_n\|=1$ and $\|x_n^*\|\leq 2$. Let $N$ be the bilinear operator
$$N(u,v)=x_1^*(v)\sum_{l=1}^\infty x_l^*(u)\frac{1}{(l-1)^2} x_{l-1}.$$
Since $\|x_n^*\|\leq 2$ and $\|x_n\|=1$ it follows that $N$ is a well defined continuous bilinear operator on $X$. We consider now the factor $\phi:\ell_1\to X$, $\phi((a_n)_n)=\sum_l a_lx_l$. Again, $\phi$ is a well defined continuous linear operator because $\|x_n\|\leq 1$ and $(a_n)_n\in\ell_1$. Observe that $\phi$ has dense range, because $x_n=\phi(e_n)$ and thus $  X=\overline {span(\{x_n\})}\subseteq\overline{R(\phi)}.$ It remains to see that $N$ is quasiconjugated to $M$ via $\phi$. Since $\phi$ is linear, it suffices to check the commutative relation for elements in the canonical basis of $\ell_1$. If $e_k$ and $e_j$ are elements of the basis we have that 
\begin{align*}
\phi(M(e_k,e_j))&=\phi(e_1^*(e_j)\frac{1}{(k-1)^2}e_{k-1})\\
&=\delta_{1,j}\frac{1}{(k-1)^2}x_{k-1}.
\end{align*}
On the other hand we have that,
\begin{align*}
N(\phi(e_k),\phi(e_j))&=N(x_k,x_j)=x_1^*(x_j)\cdot\sum_{l=1}^\infty x_l^*(x_k)\frac{1}{(l-1)^2}x_{l-1}\\  
&=\delta_{1,j} \frac{1}{(k-1)^2} x_{k-1}.
\end{align*}
\end{proof}

\begin{proof}[Proof of Theorem \ref{main Theorem}]
 Apply Theorems \ref{En Banach}, \ref{quasiconj en cualquier Frechet} and Proposition \ref{Conj}.
\end{proof}

\section{Existence of symmetric bihypercyclic operators on arbitrary Banach spaces}\label{SecBih}

 Recall that the orbit induced by a bilinear operator in the sense of Grosse-Erdmann and Kim \cite{GroKim13} with initial conditions $x,y$ is  $\cup_{n\ge 0} M^n(x,y)$ where $M^0(x,y)=\{x,y\}$ and the $n$-states are inductively defined as $M^n(x,y)=M^{n-1}(x,y)\cup \{M(z,w):z,w\in M^{n-1}(x,y)\}$. A bilinear operator is said to be \textit{bihypercyclic} if the orbit with initial conditions $x,y$,  $\cup_{n\in\zN_0} M^n(x,y)$ is dense in $X$. 

As in the cases of homogeneous polynomials and multilinear operators in the sense of B\`es and Conejero there is a notion of limit ball: if $x,y\in \f{1}{\|M\|}B_X\times \f{1}{\|M\|}B_X$, then $M^n(x,y)\sub \f{1}{\|M\|}B_X$. Moreover, in this case, the orbit tends to zero, i.e. for every open set $U$ there is some $n_0$ such that $M^n(x,y)\sub U$ for every $n\geq n_0$. Thus, the set of bihypercyclic vectors in a Banach space is never residual. Despite this restrictive fact, in \cite{GroKim13}, it was observed that if $T$ is a hypercyclic operator and $x^*$ is a nonzero linear funtional the bilinear mapping $x^*\otimes T$ is bihypercyclic, and thus there are bihypercyclic bilinear operators in arbitrary infinite dimensional separable Banach spaces. They also proved that there are bihypercyclic bilinear mappings in the finite dimensional case.  However it is unknown whether the operator can be taken to be symmetric and the following question was posed (see \cite[p. 708]{GroKim13}).

\textbf{Question A.} \textit{
Let $X$ be a separable Banach space. Does there exist a symmetric bihypercyclic operator in $\LL(^2X)$?}


We will prove that there are bihypercyclic symmetric operators on arbitrary infinite dimensional separable Banach spaces. The main tool to produce bihypercyclic bilinear operators in \cite{GroKim13} is to construct a bilinear operator $M$ such that $T(\cdot)=M(\cdot,y)$ is a hypercyclic linear operator for some $y\in X$, because in this case the orbit of $x$ by $T$ is contained in the orbit of $(x,y)$ by $M$, and thus  it follows that $M$ is bihypercyclic. We will follow here a different approach and study the orbit of the homogeneous polynomial induced by $M$. It is known that homogeneous polynomials on Banach can not be hypercyclic \cite{Ber98}, so that the subset of the orbit of $M$ with initial conditions $(x,x)$ given by $\{P^n(x):n\in\zN\}$ is never dense. However, we can still achieve $\overline{\{M(P^n(x),P^m(x)):n,m\in \zN\} }=X$. The structure of the proof will be the same as the one used to prove Theorem \ref{main Theorem}. We will look for a symmetric bihypercyclic bilinear operator $M$ such that it quasiconjugates to arbitrary separable and infinite dimensional Banach spaces. Our candidate will be the symmetrization of the bilinear operator considered in Theorem \ref{En Banach}, $M(x,y)= \f{e_1'(x)B_\omega(y)+e_1'(y)B_\omega(x)}{2}$. We will show that if $P$ is the homogeneous polynomial induced by $M$, $P(x)=M(x,x)=e_1'(x)B_\omega(x)$, then there is some vector $x$ for which $\overline{\{M(P^n(x),P^m(x)):n,m\in \zN\}}=\ell_1$. This polynomial $P$ was studied in  \cite{CardMurPoli}. We will need the following definitions and results, which were posed in \cite{CardMurPoli}. Given a homogeneous polynomial $Q$ acting on a Banach space it is worth considering its Julia set $J_Q=\partial \{x:\lim Q^n(x)=0\}$. This sets are always closed, perfect and invariant. Moreover, if the set $\{x:\lim Q^n(x)=0\}$ is dense in the space then $J_Q$ is completely invariant. In the particular case of our polynomial $P=e_1'B_w$ (see \cite[Section 4]{CardMurPoli}), we have that $P|_{J_P}:J_P\to J_P$ is transitive so that by the Birkhoff's Transitivity Theorem there is a vector $x$ such that $\overline{\{P^n(x):n\in\mathbb N\}}=J_P$. This Julia set satisfies also the following properties:

\begin{lemma}
For every $j\in \zN$, there is $n_0(j)$ such that for every $n\geq n_0(j)$, the sequence $\l n,\f{1}{j!^2},\f{1}{(j+1)!^2},\ldots \r$ is in $J_P$.
\end{lemma}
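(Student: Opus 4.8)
The plan is to argue directly from the description $J_P=\partial A$, where $A:=\{x\in\ell_1:\lim_k P^k(x)=0\}$ and $P(x)=e_1'(x)B_\omega(x)$, and to show that the vector $v=v_{n,j}:=\bigl(n,\tfrac1{j!^2},\tfrac1{(j+1)!^2},\dots\bigr)$ lies in $\overline A\cap\overline{\ell_1\setminus A}$ as soon as $n$ is large compared to $j$. First I would record the iteration formula for $P$, which follows by the same computation as the orbit formulas of Section \ref{SecBan} (it is also the one used in \cite{CardMurPoli}): one has $P^k(x)=C_k(x)\,B_\omega^k(x)$, where $C_1(x)=x_1$ and $C_{k+1}(x)=\dfrac{C_k(x)^2\,x_{k+1}}{(k!)^2}$; in particular $[P^k(x)]_1=\dfrac{C_k(x)\,x_{k+1}}{(k!)^2}$ and hence $C_{k+1}(x)=C_k(x)\,[P^k(x)]_1$.

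The inclusion $v\in\overline A$ holds for every $n$ and is the easy half. For $m\ge 2$ let $v^{(m)}$ be $v$ with its $m$-th coordinate replaced by $0$. Since $C_{m-1}$ depends only on the first $m-1$ coordinates, $C_{m-1}(v^{(m)})=C_{m-1}(v)$ is finite, so $C_m(v^{(m)})=C_{m-1}(v)^2\cdot 0/(m-1)!^2=0$, and therefore $C_k(v^{(m)})=0$, whence $P^k(v^{(m)})=0$, for all $k\ge m$. Thus $v^{(m)}\in A$, while $\|v^{(m)}-v\|_1=\frac{1}{(j+m-2)!^2}\to 0$ as $m\to\infty$, so $v\in\overline A$. (This \emph{zeroing} remark is also what prevents the lemma from clashing with the fact that $J_P$ is a small set: every such $v$ is a limit of eventually-killed orbits.)

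For the inclusion $v\in\overline{\ell_1\setminus A}$ I would show that, for $n$ large, $v$ itself escapes, so that $v\in\ell_1\setminus A$. Evaluating the recursion at $v$ (where $v_{k+1}=\frac1{(j+k-1)!^2}$) and taking logarithms yields the closed form $\log C_k(v)=2^{k-1}\bigl(\log n-\sum_{l=1}^{k-1}2^{-l}c_l\bigr)$ with $c_l:=2\log(l!)+2\log((j+l-1)!)$. Since $c_l\le 4(j+l)\log(j+l)$, the series $S(j):=\sum_{l\ge 1}2^{-l}c_l$ converges; choose $n_0(j)$ to be any integer with $n_0(j)>e^{S(j)}$. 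Then for $n\ge n_0(j)$, setting $\varepsilon:=\log n-S(j)>0$, we get $\log C_k(v)\ge \varepsilon\,2^{k-1}$ for all $k$, and therefore
$$\log\|P^k(v)\|_1\ \ge\ \log[P^k(v)]_1\ =\ \log C_k(v)-c_k\ \ge\ \varepsilon\,2^{k-1}-4(j+k)\log(j+k)\ \longrightarrow\ +\infty .$$
Hence $P^k(v)\not\to 0$, i.e. $v\notin A$, so $v\in\overline{\ell_1\setminus A}$. Combining the two inclusions, $v\in\overline A\cap\overline{\ell_1\setminus A}=\partial A=J_P$, which is the assertion with this $n_0(j)$.

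The only real work is the estimate of the third paragraph: one must check that the cost $\sum_l 2^{-l}c_l$ of ``descending'' through the super-exponentially small tail of $v$ is finite, i.e. summable against the doubling exponent $2^{k-1}$, so that a single finite threshold $n_0(j)$ forces $C_k(v)\to\infty$ faster than the factorial denominators $(k!)^2(j+k-1)!^2$ shrink. The iteration formula and the zeroing perturbation are routine.
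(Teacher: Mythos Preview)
Your argument is correct. The iteration identity $P^k(x)=C_k(x)B_\omega^k(x)$ with $C_{k+1}=C_k^2\,x_{k+1}/(k!)^2$ is exactly the one governing $P=e_1'B_\omega$, and both halves of the boundary inclusion go through: the zeroing trick gives $v\in\overline A$, while the linear recursion $a_{k+1}=2a_k-c_l$ yields $\log C_k(v)=2^{k-1}\bigl(\log n-\sum_{l<k}2^{-l}c_l\bigr)$, so the finiteness of $S(j)=\sum_{l\ge1}2^{-l}c_l$ furnishes the threshold $n_0(j)$ and forces $[P^k(v)]_1\to\infty$, hence $v\notin A$.

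As for comparison: the present paper does not prove this lemma at all; it is quoted from \cite{CardMurPoli} (see the sentence preceding the two lemmas in Section~\ref{SecBih}). Your direct verification via the closed form for $\log C_k$ is a clean self-contained argument, and in fact the underlying mechanism --- summability of $\sum 2^{-l}c_l$ because the weights $\omega_i=1/i^2$ make $c_l$ grow only like $l\log l$ --- is precisely what \cite{CardMurPoli} exploits to describe $J_P$. One small cosmetic point: since all coordinates of $v$ are positive, $C_k(v)>0$ and the logarithms are legitimate; you use this implicitly and might say so explicitly.
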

\begin{lemma}
If $y\in J_P$ and for every coordinate $i$, $|x_i|\ge|y_i|$, then $x\in J_P$.
\end{lemma}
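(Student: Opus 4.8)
The plan is to identify $J_P$ with the complement of the basin of attraction of the fixed point $0$, and then exploit that the dynamics of $P$ is monotone with respect to the coordinatewise order of absolute values. Throughout write $A:=\{z\in\ell_1:\lim_nP^n(z)=0\}$, so that by definition $J_P=\partial A$.

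The first step is to check that $A$ is open and dense, which will give $J_P=\partial A=\ell_1\setminus A$. Openness: since $P(z)=e_1'(z)B_\omega(z)$ is a $2$-homogeneous polynomial one has $\|P(z)\|\le\|P\|\,\|z\|^2$, so the ball $\{z:\|P\|\,\|z\|<1\}$ lies in $A$ and $0$ is an attracting fixed point; for an arbitrary $x\in A$ I would pick $n_0$ with $P^{n_0}(x)$ in that ball and use the continuity of $P^{n_0}$ together with the forward invariance of $A$ and the implication $P(z)\in A\Rightarrow z\in A$ to produce a ball around $x$ contained in $A$. Density: if $z\in c_{00}$ has support in $\{1,\dots,N\}$, then $[P(z)]_i=z_1\omega_iz_{i+1}=0$ for $i\ge N$, so $P$ strictly shrinks the support and $P^N(z)=0$; hence $c_{00}\subseteq A$. (Both facts are available from \cite{CardMurPoli}.) Since $\overline A=\ell_1$ and $\ell_1\setminus A$ is closed, we get $J_P=\partial A=\overline A\cap\overline{\ell_1\setminus A}=\ell_1\setminus A=\{z\in\ell_1:P^n(z)\not\to 0\}$.

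The second step is the monotonicity statement: if $|z_i|\le|w_i|$ for every $i$, then $|[P^n(z)]_i|\le|[P^n(w)]_i|$ for all $n$ and all $i$. This is an immediate induction on $n$ (the case $n=0$ is the hypothesis), the inductive step following from $[P^{n+1}(z)]_i=[P^n(z)]_1\,\omega_i\,[P^n(z)]_{i+1}$ and $\omega_i>0$; equivalently, every coordinate of $P^n$ is a monomial in the $z_j$ with nonnegative exponents and a positive coefficient.

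Putting the two steps together finishes the proof: if $y\in J_P$ then $\|P^n(y)\|_1\not\to 0$ by the first step, and if $|x_i|\ge|y_i|$ for every $i$, then $\|P^n(x)\|_1=\sum_i|[P^n(x)]_i|\ge\sum_i|[P^n(y)]_i|=\|P^n(y)\|_1$ by the second step, so $\|P^n(x)\|_1\not\to 0$, i.e.\ $x\notin A$ and hence $x\in J_P$. The only genuinely delicate point is the first step, the identification $J_P=\ell_1\setminus A$; once one knows that the basin $A$ is open and dense (which is easy here, or can be quoted from \cite{CardMurPoli}) the lemma reduces to the trivial remark that the failure of $P^n(x)$ to converge to $0$ is only reinforced when the coordinates of $x$ are enlarged.
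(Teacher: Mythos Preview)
Your proof is correct. The paper itself does not prove this lemma but quotes it from \cite{CardMurPoli}, so there is no in-paper argument to compare against; your approach---identifying $J_P$ with $\ell_1\setminus A$ via the openness and density of the basin $A$, and then using the coordinatewise monotonicity $|[P^n(z)]_i|\le|[P^n(w)]_i|$ whenever $|z_j|\le|w_j|$---is exactly the natural one and is self-contained.
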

 Applying this results it follows easily that $M$ is bihypercyclic.
\begin{theorem}\label{symmetric}
Let $M(x,y)= \f{e_1'(x)B_\omega(y)+e_1'(y)B_\omega(x)}{2}$, where $[B_\omega(x)]_i= \f{x_{i+1}}{i^2}$. Then $M$ is a bihypercyclic bilinear mapping on $\ell_1$.
\end{theorem}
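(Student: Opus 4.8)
The plan is to exploit the structure of the symmetrization together with what is known about the polynomial $P(x)=e_1'(x)B_\omega(x)$ from \cite{CardMurPoli}. First I would note that for any $x$ we have $M(P^n(x),P^m(x))=\f{1}{2}\big([P^n(x)]_1 B_\omega(P^m(x))+[P^m(x)]_1 B_\omega(P^n(x))\big)$, so the set $\{M(P^n(x),P^m(x)):n,m\in\zN\}$ lives in the span of the iterates of the backward shift applied to points of the orbit $\{P^k(x):k\in\zN\}$. Since $P|_{J_P}$ is transitive, there is a vector $x\in J_P$ with $\overline{\{P^n(x):n\in\zN\}}=J_P$; fix such an $x$. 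The task is then to show that $\overline{\{M(P^n(x),P^m(x)):n,m\}}=\ell_1$.

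The key step is to realize an arbitrary basic target vector $z=\sum_{i=1}^{N} z_i e_i\in\ell_1$ (or rather an $\varepsilon$-approximation of it) as $M(P^n(x),P^m(x))$ for suitable $n,m$. Here I would use the two lemmas stated just above. Pick $j$ large and choose, by the first lemma, some $n\geq n_0(j)$ so that the sequence $w:=(n,\f1{j!^2},\f1{(j+1)!^2},\ldots)$ lies in $J_P$; then $B_\omega$ applied to an appropriate shift of $w$, scaled by $[P^n(x)]_1$, can be made to reproduce $z$ on its first $N$ coordinates while the tail is summably small — this is exactly where the concrete weights $\omega_i=1/i^2$ and the factorial decay in the lemma are used. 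Because $w\in J_P$ and $P^n(x)$ runs densely through $J_P$, we can approximate $w$ by some $P^n(x)$ in $\ell_1$; then using the other term symmetrically (take $m$ with $[P^m(x)]_1$ small, or large, so that the cross term $[P^m(x)]_1B_\omega(P^n(x))$ is negligible or itself contributes) we arrange that $M(P^n(x),P^m(x))$ is within $\varepsilon$ of $z$. The second lemma ($J_P$ is downward closed coordinate-wise in modulus) guarantees that the modified sequences we build, obtained by possibly shrinking coordinates of an element already known to be in $J_P$, remain in $J_P$, so that they are genuinely approximable by the dense orbit of $P$.

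Concretely, the order of steps I would follow: (1) fix $x\in J_P$ with dense $P$-orbit in $J_P$; (2) given $z$ and $\varepsilon$, choose $j$ so that $\sum_{i\geq j}\f{1}{i^2 (i-1)!^2}<\varepsilon$, then $n\geq n_0(j)$ with $w=(n,\f1{j!^2},\ldots)\in J_P$ (first lemma); (3) observe $B_\omega$ of the shift $S^{N}$ of a vector whose coordinates we are free to choose (as long as they are dominated by those of $w$, invoking the second lemma) hits $z_1,\dots,z_N$ on the nose on coordinates $1,\dots,N$ after scaling by the first coordinate $n$, up to the small tail; (4) approximate that vector by $P^n(x)$ for some large $n$ in the $\ell_1$ norm using density; (5) control the second summand of $M$ — take $m$ with $[P^m(x)]_1$ either tiny (a recurrence/return near the limit ball of $P$) so the cross term vanishes in the limit, and note $[P^n(x)]_1 B_\omega(P^n(x))$ with $n=m$ would give the polynomial $P^{n+1}(x)$, which is the reason a symmetric $M$ can still work where the polynomial alone cannot; (6) conclude by a standard $\varepsilon/3$ estimate that $\|M(P^n(x),P^m(x))-z\|_1<\varepsilon$.

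The main obstacle will be step (5): disentangling the two symmetric summands of $M(P^n(x),P^m(x))$. Unlike in Theorem \ref{En Banach}, here there is no free extra variable $y$ — both arguments are forced to lie on the single orbit of $P$ — so one must carefully choose a pair of times $(n,m)$ at which one of $[P^n(x)]_1,[P^m(x)]_1$ is under control while the corresponding $B_\omega$-iterate of the other orbit point is close to the prescribed target. This is exactly what the two lemmas about $J_P$ are designed to make possible: the first produces orbit points with a prescribed (large) first coordinate and factorially decaying tail, and the second lets us freely dominate such a vector to carve out exactly the finitely many coordinates we need, so the cross term and the tail are both absorbed into the error. Once that combinatorial choice of times is pinned down, the remaining estimates are the routine $\ell_1$ tail bounds already implicit in Theorem \ref{construccion 2}.
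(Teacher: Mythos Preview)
Your reduction to showing $\overline{M(J_P,J_P)}=\ell_1$ via a point $x$ with dense $P$-orbit in $J_P$ is exactly the paper's starting point. After that, however, there are two genuine problems.

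First, you have the second lemma backwards. It says that if $y\in J_P$ and $|x_i|\ge |y_i|$ for all $i$ then $x\in J_P$; that is, $J_P$ is \emph{upward} closed coordinatewise in modulus, not downward. So ``shrinking coordinates of an element already known to be in $J_P$'' does not keep you in $J_P$; enlarging does. This undermines your step (3), where you want to freely choose small coordinates dominated by those of $w$.

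Second, your step (5) cannot be made to work. If $v\in J_P$ has $[v]_1$ tiny and $B_\omega(v)$ small (which is what you need in order that the surviving term $\tfrac12[u]_1B_\omega(v)$ be close to a prescribed $z$ while the cross term $\tfrac12[v]_1B_\omega(u)$ is negligible), then $P(v)=[v]_1B_\omega(v)$ is very small, so $v$ lies in the basin of attraction of $0$, contradicting $v\in J_P$. In other words, you cannot kill one of the two symmetric summands by making a first coordinate small while staying on the Julia set.

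The paper avoids both issues with a short algebraic trick: given $x_0\in c_{00}$, it takes $\lambda$ large and sets
\[
u=\lambda S_\omega(x_0)+\sum_{i\ge 2}\tfrac{1}{i^2}e_i+e_1,\qquad
v=-(\lambda-2)S_\omega(x_0)-\sum_{i\ge 2}\tfrac{1}{i^2}e_i+e_1.
\]
Both have first coordinate exactly $1$, so $M(u,v)=\tfrac12\big(B_\omega(u)+B_\omega(v)\big)$, and the tails cancel to give $M(u,v)=x_0$ on the nose. Membership of $u$ and $v$ in $J_P$ uses the lemmas in the \emph{correct} direction: one first knows that certain vectors with small tails lie in $J_P$, and then enlarging the finitely many coordinates touched by $\lambda S_\omega(x_0)$ (for $\lambda$ large) keeps them there by upward closedness. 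No asymptotic estimates or choice of orbit times are needed; $M(J_P,J_P)\supseteq c_{00}$ exactly, and density of the $P$-orbit plus continuity of $M$ finishes the proof.
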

\begin{proof}
By the comments above, there is a vector $x$ such that $\overline{\{P^n(x):n\in\mathbb N\}}=J_P$, so it suffices to show that $\overline{M(J_P,J_P)}=\ell_1$. Let $x_0\in c_{00}$. By the above lemmas  and by the completely invariance of $J_P$ there exists $\lambda>0$ such that $x=\lambda S_\omega(x_0) + \sum_{i=2}^\infty \f{1}{i^2}e_i+e_1\in J_P$ and $y=-(\lambda-2) S_\omega(x_0) - \sum_{i=2}^\infty \f{1}{i^2}e_i+e_1 \in J_P$, where $S_\omega$ is the formal right inverse of $B_\omega$. Now,
 \begin{align*}
     M(x,y)&=\f{x_1B_\omega(y)+y_1B_\omega(x)}{2}=\f{B_\omega(y)+B_\omega(x)}{2}\\
     &=\f{(\lambda-\lambda+2) x_0}{2}=x_0.
 \end{align*}
\end{proof}

With the same tools that we used in Theorem \ref{main Theorem} we can prove the following. 
\begin{theorem}\label{bihiperexist}
Let $X$ be an infinite dimensional separable Banach space. Then there exists a symmetric bihypercyclic bilinear mapping $A\in \LL_s(^2X)$.
\end{theorem}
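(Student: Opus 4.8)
The plan is to mirror the three-step strategy already used for Theorem \ref{main Theorem}, replacing the B\`es--Conejero orbit by the Grosse-Erdmann--Kim orbit throughout. Step one is Theorem \ref{symmetric}: the symmetric bilinear operator $M(x,y)=\tfrac{e_1'(x)B_\omega(y)+e_1'(y)B_\omega(x)}{2}$ on $\ell_1$ is bihypercyclic. Step two is the quasiconjugation machinery of Proposition \ref{Conj}, which I would restate for the Grosse-Erdmann--Kim orbit; the argument is the same induction on the $n$-states $M^n(x,y)$, using that $\phi$ is linear (hence commutes with the unions defining $M^n$) and has dense range. Note that if $M$ is symmetric and $N$ is quasiconjugated to $M$ via a linear $\phi$, then the specific $N$ we construct will be visibly symmetric as well. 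Step three is to produce, for an arbitrary infinite dimensional separable Banach space $X$, a \emph{symmetric} bilinear $A\in\LL_s(^2X)$ quasiconjugated to $M$ via the map $\phi((a_n)_n)=\sum_l a_l x_l$ coming from the $(1+\varepsilon)$-bounded Markushevich basis of Lemma \ref{Pelc}.

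The only genuinely new point is the construction in Step three: the operator $N$ of Theorem \ref{quasiconj en cualquier Frechet} is not symmetric, so I would symmetrize it. Set
$$A(u,v)=\tfrac12\Bigl(x_1^*(v)\textstyle\sum_{l\ge 2} x_l^*(u)\tfrac{1}{(l-1)^2}x_{l-1} + x_1^*(u)\textstyle\sum_{l\ge 2} x_l^*(v)\tfrac{1}{(l-1)^2}x_{l-1}\Bigr),$$
which is manifestly symmetric and bounded since $\|x_l^*\|\le 2$, $\|x_l\|=1$ and $\sum (l-1)^{-2}<\infty$. The verification that $A$ quasiconjugates to $M$ via $\phi$ reduces, by bilinearity and density of $\mathrm{span}\{e_k\}$, to checking $\phi(M(e_k,e_j))=A(\phi(e_k),\phi(e_j))=A(x_k,x_j)$ on basis vectors, and this is the same two-line computation as in Theorem \ref{quasiconj en cualquier Frechet} carried out for each of the two symmetric summands, using $x_l^*(x_k)=\delta_{l,k}$; the cross terms reproduce exactly $\tfrac12\bigl(\delta_{1,j}\tfrac{1}{(k-1)^2}x_{k-1}+\delta_{1,k}\tfrac{1}{(j-1)^2}x_{j-1}\bigr)=\phi(M(e_k,e_j))$.

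I would then finish by invoking Theorem \ref{symmetric} (bihypercyclicity of $M$ on $\ell_1$), the symmetric quasiconjugation just constructed, and the Grosse-Erdmann--Kim version of Proposition \ref{Conj} to transport bihypercyclicity from $M$ to $A$; since $A$ is symmetric by construction, $A\in\LL_s(^2X)$ is the desired operator. The main obstacle is not conceptual but bookkeeping: one must make sure that (i) the proof of Proposition \ref{Conj} really does go through verbatim for the larger Grosse-Erdmann--Kim orbit — it does, because $\phi$ linear gives $\phi(M^n(x,y))=A^n(\phi(x),\phi(y))$ by induction on the $n$-states, the base case being $\phi(\{x,y\})=\{\phi(x),\phi(y)\}$ and the inductive step using $\phi(M(z,w))=A(\phi(z),\phi(w))$ together with $\phi$ commuting with finite unions — and (ii) the symmetrization of $N$ still satisfies the commuting square, which the basis biorthogonality makes automatic. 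A secondary point worth a sentence is that $\phi$ need not be injective, so one should phrase the transport of a bihypercyclic vector $(x,y)$ for $M$ to the bihypercyclic vector $(\phi(x),\phi(y))$ for $A$ using only density of $\mathrm{range}(\phi)$, exactly as in Proposition \ref{Conj}.
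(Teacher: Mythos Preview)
Your proposal is correct and follows essentially the same approach as the paper: take $A$ to be the symmetrization of the operator $N$ from Theorem \ref{quasiconj en cualquier Frechet}, check that $A$ is quasiconjugated via $\phi$ to the symmetric bihypercyclic $M$ of Theorem \ref{symmetric}, and transport bihypercyclicity using the Grosse-Erdmann--Kim version of Proposition \ref{Conj} (which is \cite[Theorem 3]{GroKim13}). Your write-up is in fact more explicit than the paper's, spelling out the formula for $A$, the basis-vector verification of the commuting square, and the inductive transport of the $n$-states through $\phi$.
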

\begin{proof}
 Let $\phi$ and $N$ be the operators defined in the proof of Theorem \ref{quasiconj en cualquier Frechet} and $M$ the bihypercyclic symmetric bilinear defined in the proof of Theorem \ref{symmetric}. Then, if $A$ is the symmetrization of $N$ it follows that $N$ is a quasiconjugation of $M$ via the factor $\phi$. Therefore, since $M$ is bihypercyclic we obtain that $A$ is also bihypercyclic.
\end{proof}
\section{Final comments}
We would like to end the paper with two questions on non normable Fr\'echet spaces. Our proofs of Theorems \ref{main Theorem} and \ref{bihiperexist} are based on the existence of a bounded biorthogonal basis. However for non normable Fr\'echet spaces this result does not hold.

\textbf{ Problem 1.} Does every infinite dimensional and separable Fr\'echet space admit an $m$-multilinear hypercyclic operator? And a strongly transitive multilinear operator?

\textbf{ Problem 2.} Does every infinite dimensional and separable Fr\'echet space admit a symmetric bilinear bihypercyclic operator?
\section*{acknowledgement}
The author wishes to express his gratitude to Santiago Muro for his support and guidance as supervisor. His fruitful comments improved the presentation of the paper.

\end{document}